
\documentclass[letterpaper, 10 pt, conference]{ieeeconf}  

\IEEEoverridecommandlockouts                              

\overrideIEEEmargins                                      

\usepackage{amsfonts}
\usepackage{amsmath, scalefnt}
\usepackage{rotating}

\usepackage{multirow}
\usepackage{graphicx}
\usepackage{color}
\usepackage{epstopdf}
\pdfobjcompresslevel=0
\newcommand{\R}{\mathbb{R}}
\renewcommand{\S}{\mathbb{S}}
\newcommand{\N}{\mathbb{N}}
\newcommand{\n}{\mathbf{n}}
\newcommand{\m}{\mathbf{m}}
\newcommand{\e}{\mathbf{e}}
\newcommand{\f}{\mathbf{f}}
\newcommand{\bmat}[1]{\begin{bmatrix}#1\end{bmatrix}}
\newcommand{\norm}[1]{\left\lVert{#1}\right\rVert}

\newcommand{\ip}[2]{\left\langle #1, #2 \right\rangle}

\newcommand{\mcl}[1]{\mathcal{ #1}}
\newcommand{\mbf}[1]{\mathbf{ #1}}

\newcommand{\D}{\Delta}

\newtheorem{thm}{Theorem}
\newtheorem{defn}[thm]{Definition}
\newtheorem{lem}[thm]{Lemma}

\newtheorem{cor}[thm]{Corollary}

\let\bbbl\biggl
\let\bbbbl\Biggl

\let\bbbr\biggr
\let\bbbbr\Biggr




\title{\LARGE \bf
Integral Quadratic Constraints with Infinite-Dimensional Channels
}

\author{Aleksandr Talitckii$^{1}$ and Peter Seiler$^{2}$ and Matthew M. Peet$^{1}$  
\thanks{$^{1}$Aleksandr Talitckii and Matthew M. Peet are with the School for the Engineering of Matter, Transport and Energy, Arizona State University, Tempe, AZ, 85298, USA. This works is supported by the National Science Foundation under grants NSF CMMI-1935453 and NSF CMMI-1931270 
        {\tt\small atalitck@asu.edu} and {\tt\small mpeet@asu.edu }}%
\thanks{$^{2}$Peter Seiler is with the Electrical Engineering and Computer Science, University of Michigan, Ann Arbor, MI 48109, USA
        {\tt\small pseiler@umich.edu}}%
}

\begin{document}

\maketitle
\thispagestyle{empty}
\pagestyle{empty}

\begin{abstract}

Modern control theory provides us with a spectrum of methods for studying the interconnection of dynamic systems using input-output properties of the interconnected subsystems. Perhaps the most advanced framework for such input-output analysis is the use of Integral Quadratic Constraints (IQCs), which considers the interconnection of a nominal linear system with an unmodelled nonlinear or uncertain subsystem with known input-output properties. Although these methods are widely used for Ordinary Differential Equations (ODEs), there have been fewer attempts to extend IQCs to infinite-dimensional systems. In this paper, we present an IQC-based framework for Partial Differential Equations (PDEs) and Delay Differential Equations (DDEs). First, we introduce infinite-dimensional signal spaces, operators, and feedback interconnections. Next, in the main result, we propose a formulation of hard IQC-based input-output stability conditions, allowing for infinite-dimensional multipliers. We then show how to test hard IQC conditions with infinite-dimensional multipliers on a nominal linear PDE or DDE system via the Partial Integral Equation (PIE) state-space representation using a sufficient version of the Kalman-Yakubovich-Popov lemma (KYP). The results are then illustrated using four example problems with uncertainty and nonlinearity.


\end{abstract}

\section{Introduction}
As developed in the 1970's and best exemplified by transfer-function-based properties of passivity and the small-gain condition,  the Input-Output framework was a response to the increasing complexity of circuit-based subsystems. This framework obviated the need for a precise system model by characterizing the input-output behaviour of a system in terms of the input-output behaviour of its subsystems. However, by completely eliminating the model, and by only considering a subset of input-output properties, passivity and small-gain conditions resulted in substantially conservative results.

 An attempt to improve the accuracy of the input-output framework was the use of multipliers proposed by Zames-Falb~\cite{zames1968stability}, Yakubovich~\cite{yakubovich1967frequency} and others. However, verification of these multiplier-based conditions proved difficult. The Integral Quadratic Constraints (IQC) framework, introduced by Megretski and Rantzer~\cite{megretski1997system}, provided an attempt to simplify the multiplier-based input-output framework while also integrating modern model-based computational methods such as Linear Matrix Inequalities (LMIs) via generalizations of the Kalman-Yakubovich-Popov (KYP) lemma~\cite{rantzer1996kalman, gusev2006kalman}.  While this framework originally required homotopy in the unmodeled subsystem, recent works~\cite{carrasco2019conditions, seiler2014stability} have attempted to remove the homotopy condition -- thereby allowing for analysis of known nonlinear subsystems.

	Despite the success of the IQC framework, its application to delayed and PDE systems has been limited. Specifically, most work on this topic treats the delayed or PDE dynamics as an unknown subsystem, with certain characterization of its input-output behaviour (e.g. ~\cite{pfifer2015integral,peet2009positive,pfifer2015robustness}). While this was a reasonable approach at a time when analysis of linear delayed and PDE systems~\cite{fridman2020delayed} was considered computationally challenging, recent work has shown that model-based computational evaluation of the input-output properties of linear delayed and PDE systems can be performed efficiently and accurately~\cite{briat2014linear, bondarko2003necessary}. As a result, the framework for IQC-based analysis has shifted, where now delayed and PDE components are contained in the nominal subsystem and nonlinearities and uncertainties are isolated in the unmodelled subsystem. This paradigm shift, however, means that the interconnecting signals between nominal and unknown subsystem may now be infinite-dimensional.

Recent attempts to consider delayed and PDE models in the known subsystem include the projection-based approach in~\cite{barreau2020integral} (wherein the interconnection signals are finite-dimensional) and the Sum-of-Squares-based dissipation inequalities in~\cite{ahmadi2016dissipation} (wherein the interconnection signals are infinite-dimensional). Neither of these results, however, directly consider the problem of extension of the IQC framework to subsystems interconnected by infinite-dimensional signal spaces.

The goal of this paper, therefore, is to propose a generalization of the hard IQC framework for a nominal infinite dimensional system interconnected with a nonlinear or uncertain subsystem by infinite-dimensional signals. We will accomplish this goal in three steps. First, we extend the IQC framework to infinite-dimensional systems, signals, interconnections, and multipliers, and generalize an IQC stability theorem to such interconnections.

Next, we assume both the nominal subsystem and multiplier can be represented as a Partial Integral Equation (PIE) as discussed in section~\ref{sec:PIETOOLS}. PIE representations exist for most infinite-dimensional linear systems, including those with delays and those governed by PDEs. The existence of a PIE representation allows hard IQC conditions to be tested numerically using algorithms for the optimization of positive Partial Integral operators. Based on this PIE representation, we extend the KYP Lemma and use this extension to propose convex tests for conditions of the IQC theorem to be satisfied.

Finally, we examine several classes of nonlinearity and uncertainty with infinite-dimensional inputs and outputs and show that they satisfy a generalized version of the hard IQC constraints typically used for finite-dimensional systems.

Having completed these three steps, we then apply the results to several specific examples of delayed and PDE systems and show that the proposed approach is an improvement over alternatives such as quadratic stability.

\section{Notation and Signal Spaces}
\label{sec:notations}
\paragraph{Notation} We denote by $\N,\R, \S^n, I$ and $\mbf 0$ the natural numbers, the real numbers, the space of $n\times n$ symmetric matrices, the identity operator, and the null operator, respectively. For $\Omega \subset \R$, $L^n_2(\Omega)$ denotes the space of Lebesgue square integrable functions $f: \Omega \rightarrow \R^n$ with inner-product $\ip{f}{g}_{L_2} = \int_\Omega f(s)^T g(s) ds$.  For Hilbert space $\mbf H$ we use $\mbf L_{\mbf H}$  to denote the extension of $L_2$ to square-integrable functions
 $\mbf u: [0,\infty) \rightarrow \mbf H$. $\mbf L_{\mbf H}$ is itself a Hilbert space~\cite{hille1948functional} with associated inner product
\[
\left<\mbf u, \mbf v\right>_{\mbf L_{\mbf H}} = \int_0^\infty  \ip{\mbf u(t)}{ \mbf v(t)}_{\mbf H} dt.
\]
Clearly, if $\mbf H = \R^n$, then $\mbf L_{\mbf H}=L^n_2[0, \infty)$. Associated with $\mbf L_{\mbf H}$, we define an extended space, $\mbf L_{e, \mbf H}$, of functions,  ${\mbf u}:[0, \infty) \rightarrow \mbf H$ such that for any $T\geq 0$, we have that
\[
\int_0^T  \norm{\mbf u(t)}^2_{\mbf H} dt =
\int_0^T  \ip{\mbf u(t)}{\mbf u(t)}_{\mbf H} dt
\]
is finite.
%
%

Given $a,b \in \R$ and $\mbf  n \in \N^2$, we denote the Hilbert space $\mbf Z^{\mbf  n} := \R^{n_1} \times L^{n_2}_{2}[a, b] $ (with inner product $\ip{(u, U)}{(v, V)}=u^Tv+\ip{U}{V}_{L_2}$) and the extended signal space ${\mbf L}^{\mbf n}_{e, [a, b]}:={\mbf L}_{e, \mbf Z^{\mbf n}}$.

For notational convenience, given $\mbf u(t) = (u(t),  U(t)) \in \mbf Z^{\n} $ and $\mbf v(t) = (v(t),  V(t)) \in \mbf Z^{\m} $, we define the component-wise concatenation of $\mbf u(t), \mbf v(t)$ as
\[
\bmat{\mbf u(t) \\ \mbf v(t)} :=\left (\bmat{u(t) \\ v(t)}, \bmat{U(t) \\  V(t)}\right) \in \mbf Z^{\n + \m}.
\]
Moreover, given $\mbf u \in \mbf L^{\n}_{e, [a, b]}$ and $\mbf v \in \mbf L^{\m}_{e, [a, b]}$, we use the notation
\[
\bmat{\mbf u \\ \mbf v}(t) := \bmat{\mbf u(t) \\ \mbf v(t)} \in \mbf Z^{\n + \m}
\]
\paragraph{Operators} For any Hilbert space, $\mbf H$, we define the truncation operator $P_\tau:\mbf L_{e, \mbf H} \rightarrow \mbf L_{e, \mbf H}$ for any $\mbf y \in \mbf L_{e, \mbf H}$ as
\[
(P_{\tau}\mbf y)(t) = \begin{cases}\mbf y(t),\;\; 0\leq t\leq \tau \\ 0, \;\;\; \text{otherwise.}\end{cases}\\
\]


\begin{defn}
Let $\mbf H, \mbf G$ be Hilbert spaces, then an operator $G:\mbf L_{e,\mbf H}  \rightarrow \mbf L_{e,\mbf G} $ is
\begin{enumerate}
\item \textbf{Causal} if $P_\tau G = P_\tau G P_\tau.$ for any $\tau\ge 0$.
\item \textbf{Bounded  on $\mbf L_{\mbf H}$} if there exist $C\ge 0$ such that for all ${\mbf v}\in \mbf L_{\mbf H}  $, we have that $\|G{\mbf v}\|_{\mbf L_{\mbf G}} \leq C \| {\mbf v}\|_{\mbf L_{\mbf H}}$.
\item \textbf{Bounded  on $\mbf L_{e,\mbf H}$} if there exist $C\ge 0$ such that for all ${\mbf v}\in \mbf L_{e, \mbf H}  $, we have  that $\|P_\tau G{\mbf v}\|_{\mbf L_{\mbf G}} \leq C \| P_\tau {\mbf v}\|_{\mbf L_{\mbf H}} $ for all $ \tau \ge 0$.
\item \textbf{Incrementally  $\mbf L_{e,\mbf H}$-bounded} if there exist $C\ge 0$ such that for all ${\mbf v}, {\mbf u}\in \mbf L_{e, \mbf H} $, we have that
$\|P_\tau (G{\mbf v} - G{\mbf u})\|_{\mbf L_{\mbf G}} \leq C \| P_\tau ({\mbf v} - {\mbf u})\|_{\mbf L_{\mbf H}}  $  for all $ \tau \ge 0$.
\end{enumerate}

\end{defn}
For causal operators, bounded on $\mbf L_{\mbf H}$ is equivalent to bounded  on $\mbf L_{e,\mbf H}$. For a causal linear operator, bounded on $\mbf L_{\mbf H}$ is equivalent to incrementally $\mbf L_{e,\mbf H}$-bounded.

%
%
%
The set of all causal, bounded linear operators between Hilbert spaces, $\mbf H_1$ and $\mbf H_2$, is denoted $\mcl L(\mbf H_1,\mbf H_2)$ and is a Banach space with induced norm~\cite{hille1948functional}. We denote $\mcl L(\mbf H):=\mcl L(\mbf H,\mbf H)$.

Given a bounded linear operator $\mcl K:\mbf H \rightarrow \mbf H$, we define the associated multiplication operator $M_{\mcl K}:\mbf L_{e,\mbf H} \rightarrow \mbf L_{e,\mbf H}$ for $w \in \mbf L_{e,\mbf H}$ by
\[
(  M_{\mcl K} \mbf w)(t):=\mcl K \mbf w(t)\in \mbf H.
\]
\paragraph{PI operators}
We say $\mcl P$ is a Partial Integral (PI) operator on $\mbf Z^n$,  if there exists matrix $P$, bounded functions $Q_1,Q_2, R_0$, and separable functions $R_1,R_2$ such that
\begin{equation*}
\left(\mcl P\bmat{x \\ \mbf x}\right)(s) := \bmat{Px + \int_a^b Q_1(s) \mbf x(s)ds \\ Q_2(s) x + (\mcl P_{\{R_i\}} \mbf X)(s)},
\end{equation*}
where
\begin{align*}
 \mcl P_{\{R_i\}} \mbf x)(s) & :=   \\
    & \hspace{-15mm} R_0(s) \mbf x(s) + \int_a^s R_1(s, \theta) \mbf x(\theta) d\theta + \int_s^b R_2(s, \theta) \mbf x(\theta) d\theta.
\end{align*}
We denote the set of PI operators by $\Pi_4$

Given matrix $P$, bounded functions $Q_1,Q_2, R_0$, and separable functions $R_1,R_2$, the associated PI operator is denoted $\left(\mcl P \bmat{P& Q_1 \\ Q_2 & \{R_i\}} \right)\in \Pi_4$. The set of PI operators form a $*$-algebra of bounded linear operators as discussed in, e.g.~\cite{peet2021representation}.

\section{Feedback Interconnections on Hilbert Space}

\label{sec:problem}
In this section, we consider basic definitions of the interconnection of systems $G, \D$ for the case when $\mbf H=\mbf Z$ where recall that $\mbf Z^{\n}  :=\R^{n_1} \times \mbf L_2^{n_2}[a, b] $.

\begin{defn}[Interconnection of $G$ and $\Delta$]
Given operators $G:\mbf L^{\mbf n}_{e, [a, b]} \rightarrow \mbf L^{\mbf m}_{e, [a, b]}$ and  $\Delta:\mbf L^{\mbf m}_{e, [a, b]} \rightarrow \mbf L^{\mbf n}_{e, [a, b]}$,
and signals $\e\in \mbf L^{\mbf n}_{e, [a, b]}$ and $\f\in \mbf L^{\mbf m}_{e, [a, b]}$, we say that $\mbf u \in \mbf L^{\mbf n}_{e, [a, b]},   \mbf v \in \mbf L^{\mbf m}_{e, [a, b]}$ \textbf{satisfy the interconnection defined by $[G, \D]$} if
\begin{equation}
{\mbf v} = G {\mbf u} +\f \qquad  \text{and} \qquad {\mbf u} = \Delta {\mbf v} + \e. \label{eqn:interconnection}
\end{equation}
\end{defn}

Typically, $G$ is a known causal bounded linear operator and $\D$ is either nonlinear or unknown, but lies in some set $\Delta \in \mbf \Delta$ with known input-output properties. For a given $G$ and $\D$, we define the following notion of well-posedness of the feedback interconnection, guaranteeing existence and uniqueness of a causal mapping from inputs $\mbf e, \mbf f$ to outputs $\mbf u, \mbf v$.


\begin{defn}[Well-posedness]
Given operators $G:\mbf L^{\mbf n}_{e, [a, b]} \rightarrow \mbf L^{\mbf m}_{e, [a, b]}$ and  $\Delta:\mbf L^{\mbf m}_{e, [a, b]} \rightarrow \mbf L^{\mbf n}_{e, [a, b]}$, we say the interconnection defined by $[G,\D]$ is well-posed if for any $\e  \in \mbf L^{\mbf n}_{e,  [a, b ]}, \f \in \mbf L^{\mbf n}_{e,  [a, b ]}$, we have the following.
\begin{enumerate}
\item \textbf{Existence and Uniqueness:} There exist unique  ${\mbf u}\in\mbf L^{\mbf n}_{e,  [a, b ]}, {\mbf v} \in \mbf L^{\mbf m}_{e,  [a, b]}$ such that  ${\mbf u},{\mbf v}$ satisfy the interconnection defined by $[G, \D]$.
\item \textbf{Causality:} If ${\mbf u},{\mbf v}$  satisfy the interconnection defined by $[G, \D]$  and $\hat {\mbf u},\hat {\mbf v}$ satisfy the interconnection defined by $ [ G,  \D]$ for $P_\tau \e, P_\tau \f$ for some $\tau \ge 0$, then $P_\tau({\mbf u}-\hat {\mbf u})=0$ and $P_\tau({\mbf v}-\hat {\mbf v})=0$.
\end{enumerate}
\end{defn}

\textbf{Notation:} Given $G, \D:\mbf L_{e, [a, b]} \rightarrow \mbf L_{e, [a, b]} $ if the interconnection defined by $[G,\D]$ is well-posed, then for $\e,\f \in \mbf L_{e, [a, b]} $, we say that
\[
\bmat{\mbf u \\ \mbf v}=\mcl F_{G,\D}\left(\bmat{\e\\ \f}\right)
\]
if $\mbf u,\mbf v$ satisfies the interconnection defined by $[G, \D]$.
\begin{defn}
We say \textbf{the feedback system defined by $[G, \Delta]$ is stable} if the \emph{interconnection defined by $[G, \Delta]$ is well-posed} and $\mcl F_{G,\D}$ is bounded on $\mbf L^{\mbf n+\mbf m}_{\mbf Z}$ where recall $\mcl F_{G,\D}$ is bounded on $\mbf L^{\mbf n+\mbf m}_{\mbf Z}$ if there exists a $C$ such that for any $\e \in \mbf L^{\mbf n}_{\mbf Z},  \f \in \mbf L_{\mbf Z}^{\m}$, if \emph{${\mbf u},{\mbf v}$ satisfy the interconnection defined by $[G,\D]$} , then
\[
\norm{ \bmat{{\mbf v}\\ {\mbf u}}}_{\mbf L} < C \norm{\bmat{\f\\ \e}}_{\mbf L}
\]
\end{defn}
\vspace{1mm}
\subsection{Integral Quadratic Constraints}
Next, we extend the Hard IQC framework to infinite-dimensional systems.
\begin{defn} \label{def:IQC}
We say the operator $\Delta: \mbf L^\m_{e, [a, b]}  \rightarrow \mbf L^\n_{e, [a, b]} $ \textbf{ satisfies the hard IQC} defined by operators $\Psi: \mbf L^{\n+\m}_{e, [a, b]} \rightarrow \mbf L^{\n+\m}_{e, [a, b]}$ and $ \mcl K: \mbf Z^{\n + \m}\rightarrow \mbf Z^{\n + \m}$, if for any $T> 0$ and for all $\mbf v \in \mbf L^{\n + \m}_{e, [a ,b]} $
\begin{equation}
\left<P_T\Psi \bmat{ {\mbf v}\\ \D {\mbf v}},  P_T M_{\mcl K} \Psi\bmat{ {\mbf v}\\ \D {\mbf v}}\right>_{\mbf{L}} \hspace{-2mm}\ge 0, \label{IQC:D}
\end{equation}
where for all $\mbf w\in \mbf L^{\n+\m}_{e, [a ,b]} $
\[
( M_{\mcl K} \mbf w)(t):=\mcl K \mbf w(t)\in \mbf Z^{\n + \m}.
\]
\end{defn}
\vspace{3mm}

\subsection{Problem formulation}
Suppose we are given a known linear system/operator $G$ and a set of nonlinear systems/operators $\mbf \Delta$ where the ``graph'' of every $\Delta \in \mbf \Delta$, defined as
\[
\left\{\bmat{\mbf v \\ \Delta \mbf v} \in \mbf L^{\mbf n+\mbf m}_{e,[a,b]}\;:\; \mbf v \in \mbf L^{\mbf n}_{e,[a,b]}\right\},
\]
is known to satisfy a set of certain Integral Quadratic Constraints (IQCs) as parameterized the set of operators ${\mbf K}$ and $\mbf \Psi$  -- See Definition~\ref{def:IQC}. Our goal is to show that if the inverse graph of $G$ satisfies a similar IQC for some $(\mcl K,\Psi) \in {\mbf K} \times \mbf \Psi$, then the feedback interconnection of $G$ and $\Delta$ is stable for all $\Delta \in \mbf \Delta$.

\section{The Main Theorem}
\label{sec:IQC}
As discussed in the previous section, we would like to show that if the graph of $\Delta$ and the inverse graph of $G$ are separated by some quadratic form defined by $\mcl K$ and $\Psi$, then the feedback interconnection of $G$ and $\Delta$ is stable. This result is given by Theorem~\ref{thm:IQC}, the proof of which is a generalization of the technique used in~\cite{khong2021integral} and~\cite{teel1996graphs}.
\begin{thm}[IQC theorem]\label{thm:IQC}
Suppose $G:\mbf L_{e, [a,b]}^\n \rightarrow \mbf L_{e, [a,b]}^\m$ is bounded,  $\D:\mbf L_{e, [a,b]}^\m \rightarrow \mbf L_{e, [a,b]}^\n $ and the interconnection defined by $[G, \D]$ is well-posed.

Further suppose
     there exists a causal, incrementally bounded on $\mbf L_{e,[a,b]}^{\n+\m} $ operator $\Psi$, and ${\mcl K} \in \mcl L(\mbf Z^{\n +\m}$) such that
     \begin{enumerate} \item $\D$ satisfies the hard IQC defined by $\Psi, \mcl K$.
     \item For any $ \mbf u \in \mbf L^{\n}_{e, [a, b]} , \mbf v \in \mbf L^{\m}_{e, [a, b]}$ we have that
\begin{equation}
     \hspace{-10mm} \left<P_T \Psi \bmat{ G{\mbf u}\\ {\mbf u}}, P_T M_{\mcl K} \Psi \bmat{ G{\mbf u}\\ {\mbf u}}\right>_{\mbf L} \leq -\varepsilon\|{P_T \mbf u}\|^2_{\textbf{L}},  \label{IQC:G}
\end{equation}
for all $T>0$.
 \end{enumerate}
  Then the feedback system defined by $[G, \D]$ is stable.

\end{thm}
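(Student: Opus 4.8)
The plan is to reduce stability to a single a-priori energy bound: I will show there is a constant $C'$, independent of the inputs, so that every solution of the interconnection satisfies $\norm{P_T \mbf u}_{\mbf L}\le C'\norm{P_T\bmat{\f\\\e}}_{\mbf L}$ for all $T>0$; boundedness of $\mcl F_{G,\D}$ then follows by letting $T\to\infty$ and recovering $\mbf v=G\mbf u+\f$. Write $\sigma_T(\mbf z):=\ip{P_T\Psi\mbf z}{P_T M_{\mcl K}\Psi\mbf z}_{\mbf L}$. Since $P_T$ is an orthogonal projection commuting with the pointwise operator $M_{\mcl K}$, we have $\sigma_T(\mbf z)=\ip{\mbf w}{M_{\mcl K}\mbf w}_{\mbf L}$ with $\mbf w:=P_T\Psi\mbf z$, and because only this quadratic form enters I may replace $\mcl K$ by its symmetric part and assume $\mcl K=\mcl K^*$. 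Using the interconnection relations $\D\mbf v=\mbf u-\e$ and $G\mbf u=\mbf v-\f$, the graph of $\D$ and the inverse graph of $G$ become perturbations of the common signal $\mbf z:=\bmat{\mbf v\\\mbf u}$, namely $\mbf z_\D:=\bmat{\mbf v\\\D\mbf v}=\mbf z-\bmat{\mbf 0\\\e}$ and $\mbf z_G:=\bmat{G\mbf u\\\mbf u}=\mbf z-\bmat{\f\\\mbf 0}$, so that $\mbf z_\D-\mbf z_G=\bmat{\f\\-\e}=:\mbf d$.

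Next I combine the two hypotheses. Hypothesis (1) gives $\sigma_T(\mbf z_\D)\ge 0$ and hypothesis (2) gives $\sigma_T(\mbf z_G)\le-\eps\norm{P_T\mbf u}^2_{\mbf L}$, hence
\[
\eps\norm{P_T\mbf u}^2_{\mbf L}\le\sigma_T(\mbf z_\D)-\sigma_T(\mbf z_G)=\ip{\mbf w_\D-\mbf w_G}{M_{\mcl K}(\mbf w_\D+\mbf w_G)}_{\mbf L},
\]
where $\mbf w_\D:=P_T\Psi\mbf z_\D$, $\mbf w_G:=P_T\Psi\mbf z_G$, and the last equality is the polarization identity valid for self-adjoint $M_{\mcl K}$. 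I bound the two factors separately. For the difference, incremental boundedness of $\Psi$ with constant $C_\Psi$ gives $\norm{\mbf w_\D-\mbf w_G}_{\mbf L}=\norm{P_T(\Psi\mbf z_\D-\Psi\mbf z_G)}_{\mbf L}\le C_\Psi\norm{P_T\mbf d}_{\mbf L}$. For the sum, boundedness of $\Psi$ (via $\Psi\mbf 0=\mbf 0$) and of $G$ with constant $C_G$ give $\norm{\mbf w_G}_{\mbf L}\le C_\Psi\norm{P_T\mbf z_G}_{\mbf L}\le C_\Psi\sqrt{C_G^2+1}\,\norm{P_T\mbf u}_{\mbf L}$, whence $\norm{\mbf w_\D+\mbf w_G}_{\mbf L}\le 2C_\Psi\sqrt{C_G^2+1}\,\norm{P_T\mbf u}_{\mbf L}+C_\Psi\norm{P_T\mbf d}_{\mbf L}$.

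Combining, with $a:=\norm{P_T\mbf u}_{\mbf L}$, $\delta:=\norm{P_T\mbf d}_{\mbf L}=\norm{P_T\bmat{\f\\\e}}_{\mbf L}$ and $\kappa:=\norm{M_{\mcl K}}$, Cauchy--Schwarz yields a scalar quadratic estimate of the form $\eps a^2\le\alpha\,\delta a+\beta\,\delta^2$ with constants $\alpha,\beta$ depending only on $C_\Psi,C_G,\kappa$. Since $\eps>0$, solving this quadratic in $a$ gives $a\le C'\delta$, i.e. $\norm{P_T\mbf u}_{\mbf L}\le C'\norm{P_T\bmat{\f\\\e}}_{\mbf L}$ uniformly in $T$. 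Letting $T\to\infty$ (monotone convergence, since $\e,\f\in\mbf L$) gives $\mbf u\in\mbf L^{\n}$ with $\norm{\mbf u}_{\mbf L}\le C'\norm{\bmat{\f\\\e}}_{\mbf L}$; then $\norm{\mbf v}_{\mbf L}=\norm{G\mbf u+\f}_{\mbf L}\le(C_G C'+1)\norm{\bmat{\f\\\e}}_{\mbf L}$, and the two bounds combine to the required $\norm{\bmat{\mbf v\\\mbf u}}_{\mbf L}\le C\norm{\bmat{\f\\\e}}_{\mbf L}$, proving stability.

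The main obstacle is that $\Psi$ is only incrementally bounded, not linear, so $\sigma_T(\mbf z_\D)$ and $\sigma_T(\mbf z_G)$ cannot be expanded into cancelling cross terms against the common signal $\mbf z$. The device that makes the argument work is to keep the two IQCs as the single difference $\sigma_T(\mbf z_\D)-\sigma_T(\mbf z_G)$ and to exploit that the increment $\Psi\mbf z_\D-\Psi\mbf z_G$ is controlled \emph{solely} by the disturbance $\mbf d$, while the remaining (potentially large) factor $\mbf w_\D+\mbf w_G$ grows only linearly in $\norm{P_T\mbf u}_{\mbf L}$; this asymmetry is precisely what yields the quadratic-in-$a$ inequality rather than an uncontrolled one. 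A secondary technical point to verify is the normalization $\Psi\mbf 0=\mbf 0$ (equivalently, boundedness of $\Psi$ on $\mbf L_e$), which is needed to pass from incremental boundedness to the bound on $\norm{\mbf w_G}_{\mbf L}$ and holds automatically for the linear PI multipliers used later.
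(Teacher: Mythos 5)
Your proof is correct and follows essentially the same route as the paper's: subtract the two IQCs, control the resulting difference of quadratic forms via the incremental bound on $\Psi$ and the boundedness of $G$, and solve the resulting scalar quadratic inequality in $\norm{P_T\mbf u}_{\mbf L}$. The only cosmetic differences are that you symmetrize $\mcl K$ and invoke polarization where the paper uses the symmetry-free three-term expansion $\ip{a}{\mcl K a}-\ip{b}{\mcl K b}=\ip{a-b}{\mcl K(a-b)}+\ip{b}{\mcl K(a-b)}+\ip{a-b}{\mcl K b}$, and that the normalization $\Psi\mbf 0=\mbf 0$ you flag explicitly is left implicit in the paper's own bound on $\norm{P_T\Psi\bigl[\begin{smallmatrix}G\mbf u\\ \mbf u\end{smallmatrix}\bigr]}_{\mbf L}$.
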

\begin{proof}
Define the shorthand notation ${\mbf u}_T:=P_T\mbf u$ and $\ip{\mbf u}{\mbf v}_T=\ip{P_T \mbf u}{P_T \mbf v}_{\mbf L}$. Clearly, by Cauchy Schwartz, $\ip{\mbf u}{\mbf v}_T\le \norm{{\mbf u}_T}_{\mbf L}\norm{{\mbf v}_T}_{\mbf L}$.

Now, because the interconnection defined by $[G,\D]$ is well posed, the feedback system defined by $[G,\D]$ is stable if there exists a $C>0$ such that for any $\e,\f \in \mbf L_{\mbf Z}$, if $\mbf u,\mbf v$ satisfy the interconnection defined by $[G,\D]$,
\[\norm{\bmat{\mbf u\\\mbf v}}_{\mbf L_{\mbf Z}}\le C\norm{\bmat{\e\\ \f}}_{\mbf L_{\mbf Z}}.\]

Now, for $\e, \f \in \mbf L_{[a,b]}$, suppose that $\mbf u, \mbf v$ satisfy the interconnection defined by $G,\D$. Then $\mbf v=G\mbf u+\f$ and $\mbf u=\Delta \mbf v+\e$ and from equation~\eqref{IQC:D},

\begin{align*}
&\left<\Psi \bmat{ {\mbf v}\\ \D {\mbf v}}, M_{\mcl K} \Psi  \bmat{ {\mbf v}\\ \D {\mbf v}}\right>_T + \left<\Psi \bmat{ G {\mbf u}\\  {\mbf u}}, M_{\mcl K} \Psi  \bmat{ G {\mbf u}\\  {\mbf u}}\right>_T \\
  &\qquad  - \left<\Psi \bmat{ G {\mbf u}\\  {\mbf u}}, M_{\mcl K} \Psi  \bmat{ G {\mbf u}\\  {\mbf u}}\right>_T\ge 0.
\end{align*}
Subtracting Eqn.~\eqref{IQC:G} from this expression, we find
\begin{align*}
 \varepsilon\|{\mbf u}_T\|_{\mbf L}& \hspace{-1.0mm}\leq \hspace{-0.5mm} \left<\hspace{-1mm}\Psi \hspace{-0.5mm}\bmat{ {\mbf v}\\ \D {\mbf v}}\hspace{-1mm}, \hspace{-0.5mm}M_{\mcl K} \hspace{-0.5mm}\Psi \hspace{-0.5mm} \bmat{ {\mbf v}\\ \D {\mbf v}}\right>_T \hspace{-2.5mm} \hspace{-0.5mm}  - \hspace{-0.5mm} \left<\hspace{-1mm}\Psi\hspace{-0.5mm} \bmat{ G {\mbf u}\\  {\mbf u}}\hspace{-1mm},\hspace{-0.5mm} M_{\mcl K}\hspace{-0.5mm} \Psi\hspace{-0.5mm}  \bmat{ G {\mbf u}\\  {\mbf u}}\right>_T \\
 & \hspace{-8mm}= \left<\Psi \bmat{ {\mbf v} \\ \D {\mbf v}} -\Psi \bmat{ G {\mbf u}\\ {\mbf u}}, M_{\mcl K} \Psi  \bmat{ {\mbf v} \\ \D {\mbf v}} -M_{\mcl K} \Psi \bmat{ G {\mbf u}\\ {\mbf u}}\right>_T \\
 & \hspace{-2mm}+ \left<\Psi \bmat{ G {\mbf u} \\  {\mbf u} }, M_{\mcl K} \Psi  \bmat{{\mbf v} \\ \D {\mbf v}} - M_{\mcl K} \Psi \bmat{ G {\mbf u}\\ {\mbf u}}\right>_T\\
  &+ \left<\Psi \bmat{{\mbf v} \\ \D {\mbf v}} - \Psi \bmat{  G {\mbf u}\\  {\mbf u}}, M_{\mcl K} \Psi  \bmat{ G {\mbf u}\\ {\mbf u}}\right>_T.
\end{align*}
Now,  because $\Psi$ and hence $M_{\mcl K} \Psi$ are incrementally bounded with bounds $C_{\Psi}$ and $C_{M_{\mcl K} \Psi}=C_{\Psi} \norm{{\mcl K}}$, respectively, and also by Cauchy Schwartz inequality, we have
\begin{align*}
&\left<\Psi \bmat{ {\mbf v} \\ \D {\mbf v}} -\Psi \bmat{ G {\mbf u}\\ {\mbf u}}, M_{\mcl K} \Psi  \bmat{ {\mbf v} \\ \D {\mbf v}} -M_{\mcl K} \Psi \bmat{ G {\mbf u}\\ {\mbf u}}\right>_T\\
&\hspace{-3mm}\qquad \le \norm{\Psi \bmat{ {\mbf v} \\ \D {\mbf v}}\hspace{-1mm} - \Psi \bmat{ G {\mbf u}\\ {\mbf u}}} \norm{ M_{\mcl K} \Psi  \bmat{ {\mbf v} \\ \D {\mbf v}}\hspace{-1mm} - M_{\mcl K} \Psi \bmat{ G {\mbf u}\\ {\mbf u}}}_T \\
&\qquad  \le C_\Psi C_{M_{\mcl K} \Psi}\left\| \bmat{ {\mbf v} - G {\mbf u}\\ \D {\mbf v} - {\mbf u}}_T\right\|_{\mbf L}^2.
\end{align*}
Similarly, because $G$ is bounded with bound $C_G$,
 \begin{align*}
& \left<\Psi \bmat{ G {\mbf u} \\  {\mbf u} }, M_{\mcl K} \Psi  \bmat{{\mbf v} \\ \D {\mbf v}} - M_{\mcl K} \Psi \bmat{ G {\mbf u}\\ {\mbf u}}\right>_T \\
& \qquad \le C_\Psi C_{M_{\mcl K} \Psi}\left\|\bmat{G {\mbf u} \\ {\mbf u}}_T \right\|_{\mbf L} \left\| \bmat{ {\mbf v} - G {\mbf u}\\ \D {\mbf v} - {\mbf u}}_T\right\|_{\mbf L}
\end{align*}
and
 \begin{align*}
 & \left<\Psi \bmat{{\mbf v} \\ \D {\mbf v}} - \Psi \bmat{  G {\mbf u}\\  {\mbf u}}, M_{\mcl K} \Psi  \bmat{ G {\mbf u}\\ {\mbf u}}\right>_T \\
 & \qquad \le C_\Psi C_{M_{\mcl K} \Psi}\left\|\bmat{G {\mbf u} \\ {\mbf u}}_T \right\|_{\mbf L} \left\| \bmat{ {\mbf v} - G {\mbf u}\\ \D {\mbf v} - {\mbf u}}_T\right\|_{\mbf L}.
\end{align*}
Since $\mbf v=G\mbf u+\f$ and $\mbf u=\Delta \mbf v+\e$, we conclude that
 \begin{align*}
   \varepsilon \| {\mbf u}_T\|_{\mbf L}^2 &\leq  C_\Psi C_{M_{\mcl K} \Psi}\bbbl(\left\| \bmat{ {\mbf v} - G {\mbf u}\\ \D {\mbf v} - {\mbf u}}_T\right\|_{\mbf L}^2 + \\
   &\qquad \qquad + 2 \left\|\bmat{G {\mbf u} \\ {\mbf u}}_T \right\|_{\mbf L} \left\| \bmat{ {\mbf v} - G {\mbf u}\\ \D {\mbf v} - {\mbf u}}_T\right\|_{\mbf L} \bbbr) \\
   &\hspace{-8mm}=  C_\Psi C_{M_{\mcl K} \Psi}\left(\left\| \bmat{ \f\\ \e}_T\right\|_{\mbf L}^2 +  2 \left\|\bmat{G {\mbf u} \\ {\mbf u}}_T \right\|_{\mbf L} \left\| \bmat{ \f\\ \e}_T\right\|_{\mbf L} \right) \\
&\hspace{-12	mm}\le  C_\Psi C_{M_{\mcl K} \Psi}\left(\left\| \bmat{ \f\\ \e}_T\right\|_{\mbf L}^2 +  2 (C_G+1)\norm{\mbf u_T}_{\mbf L} \left\| \bmat{ \f\\ \e}_T\right\|_{\mbf L} \right).
\end{align*}
Next,  if we complete the square by adding $ C_\Psi C_{M_{\mcl K} \Psi}(C_G+1)^2\norm{\mbf u_T}^2_{\mbf L}$ to both sides, we get
\begin{align*}
 \varepsilon \| {\mbf u}_T\|_{\mbf L}^2 & + C_\Psi C_{M_{\mcl K} \Psi} (C_G+1)^2 \norm{{\mbf u}_T}^2_{\mbf L}  \\
 &= C_\Psi C_{M_{\mcl K} \Psi}\left(\left\| \bmat{ \f\\ \e}_T\right\|_{\mbf L} +  (C_G+1)\norm{\mbf u_T}_{\mbf L}\right)^2.
\end{align*}
Hence
\begin{align*}
&\sqrt{\frac{\varepsilon}{C_\Psi C_{M_{\mcl K} \Psi}} +  (C_G+1)^2} \| {\mbf u}_T\|_{\mbf L}  \\
&\qquad \le   \left\| \bmat{ \f\\ \e}_T\right\|_{\mbf L} +  (C_G+1)\norm{\mbf u_T}_{\mbf L}
\end{align*}
or for $\hat \varepsilon = \frac{\varepsilon}{C_\Psi C_{M_{\mcl K} \Psi}}$ and $K = (C_G+1)$ we have
\[
\left(\sqrt{\hat \varepsilon + K^2} - K \right) \| {\mbf u}_T\|_{\mbf L}  \leq  \left\| \bmat{ \f\\ \e}_T\right\|_{\mbf L}.
\]
Then, defining $C = \left(\sqrt{\hat \varepsilon + K^2} - K \right)^{-1}>0$ we have
\[
\|\mbf u_T\|_{\mbf L} \leq C  \left\| \bmat{ \f\\ \e}_T\right\|_{\mbf L}.
\]
Finally, we define $C_{\mbf v} := C_g C + 1$. Thus we have
\[
\|\mbf v_T\|_{\mbf L} \leq C_G \|\mbf u_T\|_{\mbf L} + \|f\|_{\mbf L} \leq C_{\mbf v}   \left\| \bmat{ \f\\ \e}_T\right\|_{\mbf L}
\]
for all $T>0$.

 We conclude that the interconnection is stable.

\end{proof}




\textbf{Remark:} For a given class of $\Delta$, we are typically given a set of valid $\mcl K$ and $\Psi$ (e.g. Lemma~\ref{lem:constant}). While it is easy to search over a convex set of $\mcl K$ for a given $\Psi$, it is not as easy to search over all possible valid $\Psi$ multipliers. A typical approach, therefore, is to choose a collection of IQCs $\{\mathcal{K}_i,\Psi_i\}_{i=1}^n$ and note that any conic combination of the following form is also a valid IQC:
\[
\Psi=\bmat{\Psi_1 \\ \vdots \\ \Psi_n} \quad \mcl K(\lambda) = \bmat{\lambda_1 \mcl K_1 & & \\
                                                                                                 & \ddots & \\
                                                                                                 &  & \lambda_n \mcl K_n},
\]
where $\{\lambda_i\}_{i=1}^n$ are any non-negative constants.

The set of $\mathcal{K}(\lambda)$ is convex thus allowing convex optimization methods to search for feasible values of $\lambda_i$.  Other convex parameterizations exists for certain classes of IQC multipliers \cite{VEENMAN20161}.

Our goal, then, is to find some $\mcl K(\lambda), \Psi$ for which Eqn.~\eqref{IQC:G} is satisfied. To achieve this goal, we require some way of characterizing the input-output properties of the multiplier-mapped graph $\Psi \bmat{G \mbf u\\ \mbf u}$. For this problem, we turn to state-space representations and the KYP lemma as extended to infinite-dimensional systems in the Partial Integral Equation (PIE) framework.

\section{Input-Output Analysis using PIEs and the KYP Lemma}
\label{sec:PIETOOLS}


In this section, we propose a method of using convex optimization to test the conditions of Theorem~\ref{thm:IQC}. This is accomplished in three parts. First, we assume the nominal system and multipliers, $\Psi$, are represented as Partial Integral Equations (PIEs). Second, we generalize the KYP lemma to PIEs. Finally, we pose the conditions of Theorem~\ref{thm:IQC} as a convex optimization problem over the cone of positive Partial Integral (PI) operators.

\subsection{Partial Integral Equations}
Our method for testing the conditions of Theorem~\ref{thm:IQC} assumes there exists a state-space representation of the systems $G,\Psi:\mbf u \mapsto \mbf y$ of the form
\begin{align}
\mcl T \dot{\mbf x}(t) & = \mcl A \mbf x(t) + \mcl B \mbf u(t) \label{eqn:G}\\
	 	 \mbf y(t)   & = \mcl C \mbf x(t) + \mcl D \mbf u(t) \notag,
\end{align}
where  $\mbf x(t) \in \mbf Z^{\mbf k} $,  $\mbf y(t) \in \mbf Z^{\m}  $, $\mbf u(t) \in \mbf Z^{\n}  $  and $\mcl T, \mcl A, \mcl B, \mcl C, \mcl D$  are PI operators with appropriate dimensions.

\noindent\textbf{Remark:} Note that most linear delayed and well-posed PDE systems can be represented in this form --- See~\cite{peet2021partial} for PDE and~\cite{peet2021representation} for delayed systems.

When such a representation exists, it is referred to as a Partial Integral Equation (PIE).
We use the PIE representation~\eqref{eqn:G} as it is possible to optimize over the cone of positive PI operators using, e.g.~\cite{shivakumar2020pietools}. This allows us to test the conditions of the the following infinite-dimensional version of the KYP lemma.

\subsection{KYP lemma}

To test the conditions of Theorem~\ref{thm:IQC}, we presume that $\Delta \in \mbf \Delta$ satisfies the IQC for some $K\in \Pi_4$ and $\Psi$, where $\Psi$ admits a PIE representation with parameters $\mcl T_\Psi, \mcl A_\Psi, \mcl B_\Psi, \mcl C_\Psi \in \Psi_4$.

 Assuming for now that $\Psi=I$, the following Lemma provides conditions under which $G$ satisfies the conditions of Theorem~\ref{thm:IQC}.

 \begin{lem}[sufficient version of KYP lemma] \label{lem:KYP}
 Suppose $G\in \mcl L(\mbf L^{\n }_{e, [a, b]} ,\mbf L^{\  \m}_{e, [a, b]} )$ is a causal bounded linear operator and there exist  $\mcl T, \mcl A, \mcl B, \mcl C, \mcl D\in \Pi_4$ such that for any $\mbf u \in \mbf L^\n_{e, [a, b]}$, $\mbf y = G \mbf u$ implies that $\mbf y$ satisfies~\eqref{eqn:G} for some $\mbf x \in \mbf L^{\mbf k}_{e, [a, b]}$. Given $\mcl K \in \Pi_4$, suppose there exists some $\varepsilon>0$ and $\mcl P \in \Pi_4$ such that $\mcl P \ge 0$ and
 \begin{align}
 \bmat{ \mcl T^*\mcl P\mcl A + \mcl A^*\mcl  P \mcl T &\mcl  P\mcl  B \\ \mcl B^*\mcl P & \varepsilon I} + \bmat{\mcl C^* \\ \mcl D^*} \mcl K \bmat{\mcl C & \mcl  D} \leq 0. \label{eqn:KYP-lemma}
\end{align}
Then for any $ \mbf u \in \mbf L^{\n}_{e, [a, b]}$ we have that
\begin{equation*}
     \hspace{-10mm} \left<P_T  G{\mbf u} , P_T M_{\mcl K}   G{\mbf u}\right>_{\mbf L} \leq -\varepsilon\|{P_T \mbf u}\|^2_{\textbf{L}},
\end{equation*}
for all $T>0$.

 \end{lem}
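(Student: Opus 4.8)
The plan is to run the standard storage-function (dissipativity) argument underlying the sufficiency direction of the KYP lemma, now lifted to the infinite-dimensional PIE setting; note that because we only claim one direction, no spectral factorization is needed -- this is purely a Lyapunov/dissipation computation. First I would introduce the candidate storage functional $V(t) := \ip{\mcl T \mbf x(t)}{\mcl P \mcl T \mbf x(t)}_{\mbf Z}$ associated with a state trajectory $\mbf x$ of the PIE~\eqref{eqn:G} driven by $\mbf u$. Since $\mcl P \ge 0$ and $\mcl P \in \Pi_4$ is self-adjoint, $V(t) \ge 0$ for all $t$.

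The core step is to convert the operator inequality~\eqref{eqn:KYP-lemma} into a pointwise-in-time dissipation inequality. Evaluating~\eqref{eqn:KYP-lemma} on the vector $\bmat{\mbf x(t)\\\mbf u(t)} \in \mbf Z^{\mbf k + \mbf n}$, using the dynamics $\mcl T\dot{\mbf x}(t) = \mcl A\mbf x(t) + \mcl B\mbf u(t)$ to identify the quadratic form built from the $(1,1)$ and off-diagonal blocks with $\dot V(t)$, and using the output relation $\mbf y(t) = \mcl C\mbf x(t) + \mcl D\mbf u(t)$ so that the last term of~\eqref{eqn:KYP-lemma} becomes $\ip{\mbf y(t)}{\mcl K\mbf y(t)}_{\mbf Z}$, I would obtain
\begin{equation*}
\dot V(t) + \varepsilon\norm{\mbf u(t)}^2_{\mbf Z} + \ip{\mbf y(t)}{\mcl K\mbf y(t)}_{\mbf Z} \le 0
\end{equation*}
for almost every $t$. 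I would then integrate over $[0,T]$. Because $G$ is a linear input-output operator, the associated state trajectory has zero initial data, $\mbf x(0)=0$, so $V(0)=0$; combining this with $V(T)\ge 0$ and recognizing $\mbf y = G\mbf u$ gives
\begin{equation*}
\varepsilon\norm{P_T\mbf u}^2_{\mbf L} + \ip{P_T G\mbf u}{P_T M_{\mcl K}G\mbf u}_{\mbf L} \le V(0) - V(T) \le 0,
\end{equation*}
which rearranges immediately into the claimed bound.

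The main obstacle is not the algebra but the analytic justification of the two reductions in the infinite-dimensional setting: (i) that $t \mapsto \mbf x(t)$ is regular enough -- absolutely continuous, with $\mcl T\dot{\mbf x}$ well defined and $(\mbf x(t),\mbf u(t)) \in \mbf Z^{\mbf k+\mbf n}$ for a.e. $t$ -- for $V$ to be absolutely continuous and for $\dot V$ to be identified pointwise from~\eqref{eqn:KYP-lemma} and the dynamics; and (ii) that the fundamental theorem of calculus may be applied to pass from the pointwise dissipation inequality to the integrated bound. I expect to handle this by first establishing the estimate on a dense class of smooth, finite-energy inputs for which solutions of~\eqref{eqn:G} are classical, and then extending to all $\mbf u \in \mbf L^{\mbf n}_{e,[a,b]}$ by continuity, using boundedness of $G$ and of the PI operators $\mcl C,\mcl D,\mcl K$. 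A secondary point to pin down is the zero-initial-condition convention for the input-output map $G$, together with the observation that only $V(T)\ge 0$ (not $V(T)=0$) is needed to fix the sign of the final estimate.
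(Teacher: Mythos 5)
Your proposal is correct and follows essentially the same route as the paper's proof: the same storage functional $V = \ip{\mcl T \mbf x}{\mcl P \mcl T \mbf x}_{\mbf Z}$, the same evaluation of the operator inequality on $(\mbf x(t),\mbf u(t))$ to get the pointwise dissipation inequality, and the same integration using $V(0)=0$ and $V(T)\ge 0$. The regularity and density considerations you flag are real but are passed over silently in the paper's own argument, so they do not constitute a divergence in approach.
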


\begin{proof} Define $V(\mbf x) = \ip{\mcl T \mbf x}{\mcl P \mcl T \mbf x }_{\mbf Z}$.
 Suppose that  $  \mbf u  \in \mbf L^{ \n}_{e,  [a,b]}$ and $\mbf y(t)= (G{\mbf u})(t)$ for some $\mbf x \in \mbf L^{ \mbf k}_{e,  [a,b]}$, such that Eqns.~\eqref{eqn:G} are satisfied. By inequality~\eqref{eqn:KYP-lemma}, we have that
\begin{align*}
&\bbbbl\langle\bmat{\mbf x(t) \\ \mbf u(t)}, \bbbbl(\bmat{ \mcl T^*\mcl P\mcl A + \mcl A^*\mcl  P \mcl T &\mcl  P\mcl  B \\ \mcl B^*\mcl P & \varepsilon I}  \\
&\qquad\qquad\qquad\qquad+ \bmat{\mcl C^* \\  \mcl D^*}   \mcl K \bmat{\mcl C &  \mcl  D}\bbbbr)\bmat{\mbf x(t) \\ \mbf u(t)}\bbbbr\rangle_{\mbf Z}  \\
& = \ip{\mcl T \dot{\mbf x}(t)}{\mcl P\mcl T \mbf x(t)} +\ip{\mcl T \mbf x(t)}{\mcl P\mcl T \dot{\mbf x}(t)}\\
&\qquad \qquad+  \varepsilon \norm{\mbf u(t)}^2_{\mbf Z} +\ip{\mbf y(y) }{\mcl K \mbf y(t)}_{\mbf Z}  \\
& = \dot V(\mbf x(t)) +\varepsilon \| \mbf u(t)\|_{\mbf Z}^2 +  \left< \mbf y(t), \mcl K \mbf y(t)\right>_{\mbf Z}\le 0.
\end{align*}
Now, since $V(\mbf x(0)) = V(0) = 0$ and $V(\mbf x(T)) \geq 0$, and integrating in time, we obtain
\begin{align*}
\varepsilon\|{P_T \mbf u}\|^2_{\textbf{L}} +   \int_0^T \left< \mbf y(t), \mcl K \mbf y(t)\right>_{\mbf Z} dt&\le -V(\mbf x(T)) + V(\mbf x(0))\\
& \le 0.
\end{align*}
We conclude that
\[
\left<P_T  G{\mbf u} , P_T M_{\mcl K}   G{\mbf u}\right>_{\mbf L} = \int_0^T \hspace{-2mm}\left< \mbf y(t), \mcl K \mbf y(t)\right>_{\mbf Z} dt\le
-\varepsilon\|{P_T \mbf u}\|^2_{\textbf{L}}.
\]

%
\end{proof}



Given PI operator $\mcl K$, the conditions of Lemma~\ref{lem:KYP} may be verified using software for optimization of PI operators as in~\cite{shivakumar2020pietools}.
%

\subsection{Augmentation of the Dynamics}
Now, we suppose that the multiplier $\Psi \in \mcl L(\mbf L^{\n + \m}_{e, [a, b]})$ also admits a PIE representation of the form
\begin{align}
\mcl T_\Psi \dot{\mbf z}(t) & = \mcl A_\Psi \mbf z(t) + \mcl B_\Psi \mbf v(t) \label{eqn:Psi}\\
	 	 \mbf w(t)   & = \mcl C_\Psi \mbf z(t) + \mcl D_\Psi \mbf v(t). \notag
\end{align}

 \begin{cor}[Augmented KYP lemma] \label{cor:KYP}
 Suppose $G\in \mcl L(\mbf L^{\n }_{e, [a, b]} ,\mbf L^{\  \m}_{e, [a, b]} )$ is a causal bounded linear operator and there exist  $\mcl T, \mcl A, \mcl B, \mcl C, \mcl D\in \Pi_4$ such that for any $\mbf u \in \mbf L^\n_{e, [a, b]}$, $\mbf y = G \mbf u$ implies that $\mbf y$ satisfies~\eqref{eqn:G} for some $\mbf x \in \mbf L^{\mbf k_G}_{e, [a, b]}$.

 Furthermore, suppose that there exist  $\mcl T_\Psi, \mcl A_\Psi, \mcl B_\Psi, \mcl C_\Psi, \mcl D_\Psi\in \Pi_4$ such that for any $\mbf v \in \mbf L^{\n+\m}_{e, [a, b]}$, $\mbf w =\Psi \mbf v$ implies that $\mbf w$ satisfies~\eqref{eqn:Psi} for some $\mbf z \in \mbf L^{\mbf k_\Psi}_{e, [a, b]}$.

Given $\mcl K \in \Pi_4$, suppose there exists some $\mcl P \in \Pi_4$ such that $\mcl P \ge 0$ and
 \begin{align}
 \bmat{ \hat{\mcl T}^*\mcl P \hat{\mcl A} +  \hat{\mcl A}^*\mcl  P \hat{\mcl T} &\mcl  P \hat{\mcl  B} \\  \hat{\mcl B}^*\mcl P & \varepsilon I} + \bmat{ \hat{\mcl C}^* \\  \hat{\mcl D}^*} \mcl K \bmat{ \hat{\mcl C} & \hat{\mcl  D}} \leq 0.  \label{eqn:KYP_PSIG}
\end{align}
where
\begin{align}
\hat{\mcl T} = \bmat{{\mcl T} & 0 \\ 0 & {\mcl T}_\psi}, \;
&\hat{\mcl A} = \bmat{\mcl A & 0 \\ {\mcl B}_\Psi\hspace{-1mm}\bmat{\mcl C\\0} & {\mcl A}_\Psi}, \; \hat{\mcl B} = \bmat{{\mcl B} \\ {\mcl B}_\Psi\hspace{-1mm}\bmat{D \\ I}}, \notag \\
&\hspace{-10mm}\hat{\mcl C} =\bmat{{\mcl D}_\Psi\hspace{-1mm}\bmat{\mcl C\\0} & {\mcl C}_\Psi}, \quad
\hat{\mcl D} = {\mcl D}_\Psi\hspace{-1mm}\bmat{\mcl D \\ I}. \label{eqn:PsiG}
\end{align}
Then for any $ \mbf u \in \mbf L^{\n}_{e, [a, b]}, $ we have that
\begin{equation*}
     \hspace{-10mm} \left<P_T \Psi \bmat{ G{\mbf u}\\ {\mbf u}}, P_T M_{\mcl K}  \Psi\bmat{ G{\mbf u}\\ {\mbf u}}\right>_{\mbf L} \leq -\varepsilon\|{P_T \mbf u}\|_{\textbf{L}},
\end{equation*}
for all $T>0$.
 \end{cor}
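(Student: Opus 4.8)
The plan is to recognize that the composite operator $\hat G : \mbf u \mapsto \Psi \bmat{G \mbf u \\ \mbf u}$ is itself a causal, bounded, linear operator admitting a PIE representation whose parameters are precisely the augmented operators $\hat{\mcl T}, \hat{\mcl A}, \hat{\mcl B}, \hat{\mcl C}, \hat{\mcl D}$ of~\eqref{eqn:PsiG}. Once this is established, inequality~\eqref{eqn:KYP_PSIG} is exactly the hypothesis~\eqref{eqn:KYP-lemma} of Lemma~\ref{lem:KYP} written for the hatted operators, so a direct application of Lemma~\ref{lem:KYP} to $\hat G$ with multiplier $\mcl K$ yields the claimed bound, since $\hat G \mbf u = \Psi \bmat{G \mbf u \\ \mbf u}$.

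The central step is to build the PIE for $\hat G$ by state augmentation. Fix $\mbf u \in \mbf L^\n_{e,[a,b]}$ and let $\mbf x \in \mbf L^{\mbf k_G}_{e,[a,b]}$ be a state realizing $G$ so that~\eqref{eqn:G} holds with $\mbf y = \mcl C \mbf x + \mcl D \mbf u$. The signal driving $\Psi$ is $\mbf v = \bmat{G\mbf u \\ \mbf u} = \bmat{\mbf y \\ \mbf u}$, which I rewrite as $\mbf v = \bmat{\mcl C \\ 0}\mbf x + \bmat{\mcl D \\ I}\mbf u$. Substituting this into the $\Psi$ dynamics~\eqref{eqn:Psi} and stacking the states as $\hat{\mbf x} = \bmat{\mbf x \\ \mbf z}$, a short calculation gives $\hat{\mcl T}\dot{\hat{\mbf x}} = \hat{\mcl A}\hat{\mbf x} + \hat{\mcl B}\mbf u$ and $\mbf w = \hat{\mcl C}\hat{\mbf x} + \hat{\mcl D}\mbf u$ with exactly the block operators defined in~\eqref{eqn:PsiG}. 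The augmented state $\hat{\mbf x} \in \mbf L^{\mbf k_G+\mbf k_\Psi}_{e,[a,b]}$ exists because $\mbf x$ exists from the PIE for $G$ and $\mbf z \in \mbf L^{\mbf k_\Psi}_{e,[a,b]}$ exists from the PIE for $\Psi$ applied to the input $\mbf v$.

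To invoke Lemma~\ref{lem:KYP} I also check its standing hypotheses for $\hat G$: it is linear and causal as a composition of the causal linear operators $G$, the concatenation $\mbf u \mapsto \bmat{G\mbf u \\ \mbf u}$, and $\Psi$, and it is bounded since $G$ and $\Psi$ are bounded. Finally, each hatted operator lies in $\Pi_4$: because $\Pi_4$ is a $*$-algebra of bounded PI operators, the products $\mcl B_\Psi \bmat{\mcl C \\ 0}$ and $\mcl D_\Psi \bmat{\mcl D \\ I}$, together with the block-structured sums and the block-diagonal $\hat{\mcl T}$, all remain PI operators. I expect the only genuine work to be this bookkeeping, namely confirming that the substitution produces precisely the claimed block operators and that $\Pi_4$-membership is preserved under the block constructions; no analytic estimate beyond Lemma~\ref{lem:KYP} is required.
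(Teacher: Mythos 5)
Your proposal is correct and follows essentially the same route as the paper: both recognize that $\Psi \bmat{G \\ I}$ admits the PIE representation $\{\hat{\mcl T},\hat{\mcl A},\hat{\mcl B},\hat{\mcl C},\hat{\mcl D}\}$ via state augmentation and then apply Lemma~\ref{lem:KYP} directly. Your additional bookkeeping (verifying causality, boundedness, and $\Pi_4$-membership of the block operators) is a welcome elaboration of details the paper leaves implicit.
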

\begin{proof} The proof follows immediately from Lemma~\ref{lem:KYP} since if $G$ has PIE representation $\{\mcl T,\mcl A, \mcl B, \mcl C,\mcl D\}$ and $\Psi$ has PIE representation $\{\mcl T_\Psi, \mcl A_\Psi, \mcl B_\Psi, \mcl C_\Psi, \mcl D_\Psi\}$, then $\Psi \bmat{G\\I}$ has PIE representation $\{\hat{\mcl T},\hat{\mcl A}, \hat{\mcl B}, \hat{\mcl C},\hat{\mcl D}\}$ -- i.e. if $\mbf w=\Psi \bmat{G\\I}\mbf u$, then for some $\mbf x,\mbf z$,
\begin{align*}
\bmat{{\mcl T} &\hspace{-1mm} 0 \\ 0 &\hspace{-1mm} {\mcl T}_\psi} \bmat{\dot{\mbf x}(t)\\\dot{\mbf z}(t)} & = \bmat{\mcl A & \hspace{-1mm} 0 \\ {\mcl B}_\Psi \hspace{-1mm}\bmat{\mcl C\\0} &\hspace{-1mm} {\mcl A}_\Psi}\hspace{-1mm} \bmat{ {\mbf x}(t)\\ {\mbf z}(t)} + \bmat{{\mcl B} \\ {\mcl B}_\Psi\hspace{-1mm} \bmat{D \\ I}} \mbf u(t) \label{eqn:Psi}\\
	 	 \mbf w(t)   & =\bmat{{\mcl D}_\Psi\hspace{-1mm} \bmat{\mcl C\\0} & {\mcl C}_\Psi} \hspace{-1mm}\bmat{ {\mbf x}(t)\\ {\mbf z}(t)} + {\mcl D}_\Psi \hspace{-1mm}\bmat{\mcl D \\ I} \mbf u(t). \notag
\end{align*}

\end{proof}

%

\subsection{Testing the Conditions of Theorem~\ref{thm:IQC}}
We now suppose the existence of a PIE representation of $G$ and $\Psi$ and propose a convex optimization problem whose feasibility verifies the conditions of Thm.~\ref{thm:IQC}.
\begin{thm}\label{thm:KYP_and_IQC}
Suppose  $G\in \mcl L(\mbf L^{\n }_{e, [a, b]} ,\mbf L^{\  \m}_{e, [a, b]} )$ is a causal bounded linear operator and for $\mbf u \in \mbf L^\n_{e, [a, b]}$,  $\mbf y = G \mbf u$ implies Eqns.~\eqref{eqn:G} are satisfied for $\{\mcl T,\mcl A, \mcl B, \mcl C,\mcl D\}$ and some $\mbf x \in \mbf L^{\mbf k_G}_{e, [a, b]}$.

Further suppose that for any  $\D \in \mbf \D$, the interconnection defined by $[G, \D]$ is well-posed and $\Delta$ satisfies the Hard IQC defined by $\mcl K\in \Pi_4$ and $\Psi$ where $\mbf w = \Psi \mbf v$ implies Eqns.~\eqref{eqn:Psi} are satisfied for $\{\mcl T_\Psi, \mcl A_\Psi, \mcl B_\Psi, \mcl C_\Psi, \mcl D_\Psi\}$ and some $\mbf z \in \mbf L_{e, [a, b]}^{\mbf k_\Psi}$.

Then we have the following.
\begin{enumerate}
\item If there exists $\mcl P \in \Pi_4$ such that $\mcl P \ge 0$ and Inequality~\eqref{eqn:KYP_PSIG} holds for $\{\hat{\mcl T},\hat{\mcl A},\hat{\mcl B},\hat{\mcl C},\hat{\mcl D}\}$ as defined in~\eqref{eqn:PsiG}, we have that the feedback interconnection defined by $[G, \D]$ is stable for all $\D \in \mbf \D$.
\end{enumerate}

\end{thm}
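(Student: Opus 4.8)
The plan is to recognize that this theorem is not a fresh argument but a composition of two results already established: the Augmented KYP Lemma (Corollary~\ref{cor:KYP}) supplies condition (2) of the IQC Theorem, while the standing hypothesis on $\D$ supplies condition (1). So I would prove it by checking that every hypothesis of Theorem~\ref{thm:IQC} is in hand and then invoking that theorem directly.

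First I would confirm the auxiliary hypotheses of Theorem~\ref{thm:IQC}. The operator $G$ is assumed causal, bounded, and linear, and the interconnection defined by $[G,\D]$ is assumed well-posed, so those requirements are immediate. The multiplier $\mcl K\in\Pi_4$ lies in $\mcl L(\mbf Z^{\n+\m})$ since PI operators form a $*$-algebra of bounded linear operators. The multiplier $\Psi$ is assumed to lie in $\mcl L(\mbf L^{\n+\m}_{e,[a,b]})$, hence is causal and bounded; by the remark following the operator definitions, a causal bounded linear operator is incrementally bounded on the extended space, which is exactly the regularity that Theorem~\ref{thm:IQC} demands of $\Psi$.

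Second, and this is the one substantive step, I would apply Corollary~\ref{cor:KYP}. Because $G$ admits the PIE representation $\{\mcl T,\mcl A,\mcl B,\mcl C,\mcl D\}$ and $\Psi$ admits the PIE representation $\{\mcl T_\Psi,\mcl A_\Psi,\mcl B_\Psi,\mcl C_\Psi,\mcl D_\Psi\}$, the composite map $\Psi\bmat{G\\I}$ admits the augmented PIE representation $\{\hat{\mcl T},\hat{\mcl A},\hat{\mcl B},\hat{\mcl C},\hat{\mcl D}\}$ displayed in~\eqref{eqn:PsiG}, as verified inside the proof of Corollary~\ref{cor:KYP}. Feasibility of the operator inequality~\eqref{eqn:KYP_PSIG} with $\mcl P\ge 0$ then yields, through Corollary~\ref{cor:KYP}, precisely the dissipation inequality~\eqref{IQC:G} for some $\varepsilon>0$, which is condition (2) of Theorem~\ref{thm:IQC}.

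Finally, condition (1) of Theorem~\ref{thm:IQC} — that $\D$ satisfies the hard IQC defined by $(\mcl K,\Psi)$ — is exactly the standing hypothesis imposed on each $\D\in\mbf\D$. With both numbered conditions and all auxiliary hypotheses verified, Theorem~\ref{thm:IQC} applies and certifies that the feedback system defined by $[G,\D]$ is stable; since the same $(\mcl K,\Psi)$ and the single $\mcl P$ serve every member of the class, stability holds uniformly for all $\D\in\mbf\D$. I expect no genuine difficulty here: the only points needing care are confirming that the composite $\Psi\bmat{G\\I}$ really carries the representation in~\eqref{eqn:PsiG} so that Corollary~\ref{cor:KYP} is applicable, and the small bookkeeping that membership of $\Psi$ in $\mcl L$ delivers the incremental boundedness that Theorem~\ref{thm:IQC} requires.
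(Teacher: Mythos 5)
Your proposal is correct and follows exactly the same route as the paper's proof: apply Corollary~\ref{cor:KYP} to convert feasibility of~\eqref{eqn:KYP_PSIG} into condition~\eqref{IQC:G}, take the hard IQC on each $\D\in\mbf\D$ as the other hypothesis, and invoke Theorem~\ref{thm:IQC}. Your extra care in checking the auxiliary hypotheses (that $\mcl K\in\mcl L(\mbf Z^{\n+\m})$ and that $\Psi$ is causal and incrementally bounded) is detail the paper leaves implicit, but the argument is the same.
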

\begin{proof}
Suppose there exists $\mcl P \in \Pi_4$ such that $\mcl P \ge 0$ and Inequality~\eqref{eqn:KYP_PSIG} is satisfied. As per Corollary~\ref{cor:KYP}, we have that Inequality~\eqref{IQC:G} holds for all $\mbf u \in \mbf L^\n_{e, [a, b]}$ -- i.e.
 \begin{equation*}
     \left<P_T \Psi \bmat{ G{\mbf u}\\ {\mbf u}}, P_T M_{\mcl K}  \Psi\bmat{ G{\mbf u}\\ {\mbf u}}\right>_{\mbf L} \leq -\varepsilon\|{P_T \mbf u}\|_{\textbf{L}}.
\end{equation*}
Since any $\D \in \mbf \D$ satisfies the Hard IQC~\eqref{IQC:G}, Theorem~\ref{thm:IQC} implies that feedback system defined by $[G, \D]$ is stable for any $\D \in \mbf \D$.
\end{proof}

\section{Types of IQC}
\label{sec:typesIQC}
In Section~\ref{sec:PIETOOLS}, we have assumed that the causal uncertain or nonlinear subsystem $\D:\mbf L^{\m}_{e, [a, b]} \rightarrow \mbf L^\n_{e, [a, b]}$ is known a priori to satisfy a hard IQC defined by some $\mcl K$ and $\Psi$. As is typical in the finite-dimensional case, the set of $\mcl K$ and $\Psi$ for which the hard IQC hold are determined by the input-output properties of $\Delta$. In this section, we review the infinite-dimensional equivalent of several well-studied classes of uncertainty/nonlinearity and provide corresponding infinite-dimensional extensions of the relevant finite-dimensional IQCs.


\subsection{Real Constant Multiplication}
\begin{lem} \label{lem:constant}
Suppose that $(\Delta\mbf v)(t)=\delta \mbf v(t)$ for some $\delta \in \R$ such that $|\delta| \le 1$. Then for any $\mathcal{P}, \mathcal{R} \in \Pi_4$ such that $\mathcal{R}^* = - \mathcal{R}$, $\mathcal{P}^* =  \mathcal{P}\ge 0$ and for any causal bounded linear $H\in \mcl L (\mbf L^{\n + \m}_{e, [a, b]})$ we have that $\Delta$ satisfies the Hard IQC defined by
\begin{equation}
 \mcl K = \bmat{ \mathcal{P} & \mathcal{R} \\ \mathcal{R}^* & -\mathcal{P}}\quad \text{ and } \quad  \Psi = \bmat{  H & 0 \\ 0 &   H}.\label{eqn:constant}
 \end{equation}
\end{lem}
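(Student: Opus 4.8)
The plan is to verify Definition~\ref{def:IQC} directly by reducing the quadratic form to a pointwise-in-time computation and reading off its sign from the three hypotheses. First I would exploit the structure of $\Psi=\diag(H,H)$ together with the pointwise action $(\D\mbf v)(t)=\delta\mbf v(t)$ and the linearity of $H$. Since $H$ is linear, the scalar $\delta$ passes through it, giving
\[
\Psi\bmat{\mbf v\\ \D\mbf v}=\bmat{H\mbf v\\ H(\delta \mbf v)}=\bmat{\mbf w\\ \delta \mbf w},\qquad \mbf w:=H\mbf v .
\]
Thus the whole multiplier-mapped graph is governed by the single signal $\mbf w\in\mbf L_{e,[a,b]}$.

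Next, because $M_{\mcl K}$ acts pointwise (by $(M_{\mcl K}\mbf w)(t):=\mcl K\mbf w(t)$) and $P_T$ merely restricts the $\mbf L$-inner product to $\int_0^T(\cdot)\,dt$ of the underlying $\mbf Z$-inner product, the quantity in~\eqref{IQC:D} equals $\int_0^T \ip{\bmat{\mbf w(t)\\ \delta\mbf w(t)}}{\mcl K\bmat{\mbf w(t)\\ \delta\mbf w(t)}}_{\mbf Z}\,dt$. I would then expand the block operator $\mcl K=\bmat{\mcl P & \mcl R\\ \mcl R^* & -\mcl P}$, which produces two diagonal contributions, $\ip{\mbf w(t)}{\mcl P\mbf w(t)}_{\mbf Z}$ and $-\delta^2\ip{\mbf w(t)}{\mcl P\mbf w(t)}_{\mbf Z}$, together with the two cross terms $\delta\,\ip{\mbf w(t)}{\mcl R\mbf w(t)}_{\mbf Z}$ and $\delta\,\ip{\mbf w(t)}{\mcl R^*\mbf w(t)}_{\mbf Z}$.

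The one step requiring attention is the cancellation of the cross terms: using $\mcl R^*=-\mcl R$ we have $\ip{\mbf w(t)}{\mcl R^*\mbf w(t)}_{\mbf Z}=-\ip{\mbf w(t)}{\mcl R\mbf w(t)}_{\mbf Z}$, so their sum vanishes (equivalently, skew-adjointness forces $\ip{\mbf w(t)}{\mcl R\mbf w(t)}_{\mbf Z}=0$ in the real inner product). What remains is exactly $(1-\delta^2)\ip{\mbf w(t)}{\mcl P\mbf w(t)}_{\mbf Z}$. Finally I would integrate and invoke the two sign hypotheses: $\abs{\delta}\le 1$ gives $1-\delta^2\ge 0$, while $\mcl P\ge 0$ gives $\ip{\mbf w(t)}{\mcl P\mbf w(t)}_{\mbf Z}\ge 0$ pointwise, so the integrand is non-negative and
\[
\ip{P_T\Psi\bmat{\mbf v\\ \D\mbf v}}{P_T M_{\mcl K}\Psi\bmat{\mbf v\\ \D\mbf v}}_{\mbf L}=\int_0^T (1-\delta^2)\,\ip{\mbf w(t)}{\mcl P\mbf w(t)}_{\mbf Z}\,dt\ge 0
\]
for all $T>0$ and all $\mbf v$, which is precisely the hard IQC required by Definition~\ref{def:IQC}.

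There is no genuine obstacle here; the lemma is essentially a bookkeeping exercise in block-operator algebra. The computation cleanly isolates the role of each assumption: linearity of $H$ lets the scalar $\delta$ factor out and yields the coefficient $(1-\delta^2)$, skew-adjointness of $\mcl R$ eliminates the indefinite cross terms, and $\mcl P\ge 0$ with $\abs{\delta}\le 1$ supplies the final non-negativity. The only care needed is in correctly reducing the truncated $\mbf L$-inner product to the pointwise $\mbf Z$-inner product and tracking the $\delta$-factors through the block expansion.
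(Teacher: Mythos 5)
Your proposal is correct and follows essentially the same route as the paper: factor $\delta$ through the linear $H$ so the multiplier-mapped graph becomes $(\mbf w,\delta\mbf w)$ with $\mbf w=H\mbf v$, reduce the truncated $\mbf L$-inner product to a pointwise $\mbf Z$-integrand, and conclude that it equals $(1-\delta^2)\ip{\mbf w(t)}{\mcl P\mbf w(t)}_{\mbf Z}\ge 0$. The only difference is that you spell out the skew-adjointness cancellation of the $\mcl R$ cross terms, which the paper leaves implicit.
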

\vspace{1mm}
\begin{proof}
For any $\mathcal{P}, \mathcal{R} \in \Pi_4$ such that $\mathcal{R}^* = - \mathcal{R}$, $\mathcal{P}^* =  \mathcal{P}$ and for any causal bounded linear $H \in \mcl L(\mbf L^{\n + \m}_{e, [a, b]})$ we have that
\begin{align*}
     &\left<P_T \Psi \bmat{\mbf v\\ \D \mbf v}, P_T  M_{\mcl K}\Psi \bmat{\mbf v\\ \D\mbf v}\right>  \\
     &\qquad  =  \int_0^T \left<   \bmat{  H\mbf v \\ H \D\mbf v}(t) , M_{\mcl K}\bmat{  H\mbf v \\ H\D\mbf v}(t)\right>_{\mbf Z} dt \\
     &\qquad  =  \int_0^T (1 - \delta^2) \left< (H\mbf v)(t) , M_{\mcl P} (H \mbf v)(t)\right>_{\mbf Z}dt\\& \qquad \ge 0.
\end{align*}
Therefore  $\Delta$ satisfies the IQC defined by
\[
 \mcl K = \bmat{ \mathcal{P} & \mathcal{R} \\ \mathcal{R}^* & -\mathcal{P}}\quad \text{ and } \quad \Psi = \bmat{  H & 0 \\ 0 &   H}.
 \]
\end{proof}

\begin{cor}\label{cor:timeconstant} Suppose that $ (\Delta\mbf v)(t) = \delta(t) \mbf v(t)$ for some $\delta:\R\rightarrow \R$ such that $\sup_{t>0}|\delta(t)| \leq 1$.

Then for any $\mathcal{P}, \mathcal{R} \in \Pi_4$ such that $\mathcal{R}^* = - \mathcal{R}$ and $\mathcal{P}^* =  \mathcal{P} \geq 0$ we have that $\D$ satisfies the Hard IQC defined by
\[
 \mcl K = \bmat{ \mathcal{P} & \mathcal{R} \\ \mathcal{R}^* & -\mathcal{P}} \quad \text{ and } \quad \Psi = I,
 \]
\end{cor}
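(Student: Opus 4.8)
The plan is to reduce the hard IQC of Definition~\ref{def:IQC} to a single time-integral of a pointwise quadratic form. Since $\Psi = I$, the left-hand side of~\eqref{IQC:D} is simply $\ip{P_T \bmat{\mbf v \\ \D \mbf v}}{P_T M_{\mcl K} \bmat{\mbf v \\ \D \mbf v}}_{\mbf L}$. Both $M_{\mcl K}$ and $\D$ act pointwise in time --- $(M_{\mcl K}\mbf w)(t) = \mcl K \mbf w(t)$ and $(\D \mbf v)(t) = \delta(t) \mbf v(t)$ --- so this inner product equals $\int_0^T \ip{\bmat{\mbf v(t) \\ \delta(t)\mbf v(t)}}{\mcl K \bmat{\mbf v(t) \\ \delta(t)\mbf v(t)}}_{\mbf Z}\, dt$, and it therefore suffices to show the integrand is nonnegative for almost every $t$.

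First I would expand the pointwise quadratic form using the block structure of $\mcl K$. Abbreviating $w := \mbf v(t)$ and $d := \delta(t)$, a direct computation gives $\ip{\bmat{w \\ dw}}{\mcl K \bmat{w \\ dw}}_{\mbf Z} = \ip{w}{\mathcal{P} w}_{\mbf Z} + d\,\ip{w}{(\mathcal{R} + \mathcal{R}^*) w}_{\mbf Z} - d^2 \ip{w}{\mathcal{P} w}_{\mbf Z}$. The next step is to invoke the skew-adjointness hypothesis $\mathcal{R}^* = -\mathcal{R}$, which makes the middle term vanish identically, leaving exactly $(1 - \delta(t)^2)\ip{\mbf v(t)}{\mathcal{P}\mbf v(t)}_{\mbf Z}$.

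To finish, I would use $\mathcal{P} = \mathcal{P}^* \ge 0$, so $\ip{\mbf v(t)}{\mathcal{P}\mbf v(t)}_{\mbf Z} \ge 0$, together with $\sup_{t>0}|\delta(t)| \le 1$, which gives $1 - \delta(t)^2 \ge 0$; hence the integrand is nonnegative pointwise and the IQC follows upon integration. The only conceptual point to flag --- and the reason this is stated separately rather than as a special case of Lemma~\ref{lem:constant} --- is that the time-varying gain $\delta(\cdot)$ forces the choice $\Psi = I$: a nontrivial multiplier $H$ as in~\eqref{eqn:constant} would have to be commuted past the multiplication by $\delta(t)$, which is only legitimate when $\delta$ is constant. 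With $\Psi = I$ everything factorizes pointwise in $t$, so the cross-term cancellation holds uniformly in $t$ with no commutation required; this is the mild subtlety to be careful about, rather than a genuine obstacle.
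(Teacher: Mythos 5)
Your proposal is correct and follows essentially the same route as the paper: the paper's proof of this corollary is simply a reference to the proof of Lemma~\ref{lem:constant}, and your pointwise expansion --- cross terms vanishing by $\mathcal{R}^*=-\mathcal{R}$, leaving $(1-\delta(t)^2)\ip{\mbf v(t)}{\mathcal{P}\mbf v(t)}_{\mbf Z}\ge 0$ --- is exactly that argument specialized to $\Psi=I$ with $\delta$ now time-varying. Your closing remark on why the time-varying gain forces $\Psi=I$ (no commutation of $H$ past $\delta(t)$) correctly identifies the only point where the lemma's proof does not carry over verbatim.
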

\begin{proof}
 The proof is similar to that of Lemma~\ref{lem:constant}
\end{proof}
%
\subsection{Polytopic uncertainty}

\begin{lem}\label{lem:polytope}
Let $\mbf \D := \{\sum_i \mu_i \D_i\;:\;\sum_i \mu_i=1 \}$ where $\D_i \in \Pi_4$. Then $\D$ satisfies the Hard IQC defined by
\[
\mcl K =  \bmat{\mcl P & \mcl R \\ \mcl R^* & \mcl Q} \quad \text{and} \quad \Psi = I
\]
where  $\mcl P, \mcl R, \mcl Q \in \Pi_4$ are such that $\mcl Q < 0$ and
\begin{align}
P + \D_i^* R^* + R \D_i + \D_i^* Q \D_i \geq 0 \quad \text{for all } i. \label{eqn:polytope}
\end{align}
\end{lem}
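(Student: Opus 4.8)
The plan is to reduce the Hard IQC of Definition~\ref{def:IQC} to a single pointwise operator inequality and then to exploit a Jensen-type concavity in the uncertain operator $\D$. Throughout I adopt the standard polytopic convention that the weights obey $\mu_i \ge 0$ in addition to $\sum_i \mu_i = 1$, as this nonnegativity is essential below. Since $\Psi = I$, the multiplier $M_{\mcl K}$ acts pointwise in time, and each $\D_i \in \Pi_4$ (hence $\D = \sum_i \mu_i \D_i$) is memoryless in time and acts spatially, so that $(\D \mbf v)(t) = \D\,\mbf v(t)$. I would therefore first write
\[
\ip{P_T \bmat{\mbf v \\ \D \mbf v}}{P_T M_{\mcl K}\bmat{\mbf v \\ \D \mbf v}}_{\mbf L} = \int_0^T \ip{\bmat{\mbf v(t) \\ (\D\mbf v)(t)}}{\mcl K \bmat{\mbf v(t) \\ (\D \mbf v)(t)}}_{\mbf Z}\, dt.
\]
Expanding the block form of $\mcl K$ and moving $\D$ across the inner product via the adjoint identities $\ip{\D \mbf v}{\mcl R^* \mbf v} = \ip{\mbf v}{\D^* \mcl R^* \mbf v}$ and $\ip{\D\mbf v}{\mcl Q \D \mbf v} = \ip{\mbf v}{\D^* \mcl Q \D \mbf v}$, the integrand reduces to $\ip{\mbf v(t)}{\Phi(\D)\,\mbf v(t)}_{\mbf Z}$ where
\[
\Phi(\D) := \mcl P + \mcl R \D + \D^* \mcl R^* + \D^* \mcl Q \D.
\]
It then suffices to prove the operator inequality $\Phi(\D) \ge 0$, since integrating a nonnegative pointwise form over $[0,T]$ immediately yields~\eqref{IQC:D}.

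The heart of the argument is that $\Phi$ is affine in $\D$ except for the quadratic term $\D^* \mcl Q \D$, and I would show that $\D \mapsto \D^* \mcl Q \D$ is operator concave precisely because $\mcl Q = \mcl Q^* \le 0$. Using $\sum_i \mu_i = 1$ to absorb the scalar weights, one verifies the algebraic identity
\[
\Bigl(\sum_i \mu_i \D_i\Bigr)^{\!*} \mcl Q \Bigl(\sum_i \mu_i \D_i\Bigr) - \sum_i \mu_i\, \D_i^* \mcl Q \D_i = -\frac{1}{2}\sum_{i,j}\mu_i \mu_j\, (\D_i - \D_j)^* \mcl Q\, (\D_i - \D_j).
\]
Because $\mcl Q \le 0$ forces each $(\D_i - \D_j)^* \mcl Q (\D_i - \D_j) \le 0$ while $\mu_i \mu_j \ge 0$, the right-hand side is a nonnegative operator, giving the Jensen bound $\D^* \mcl Q \D \ge \sum_i \mu_i \D_i^* \mcl Q \D_i$.

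Finally I would assemble the two pieces. The affine part of $\Phi$ distributes exactly over the convex combination because $\sum_i \mu_i = 1$, namely $\mcl P + \mcl R \D + \D^* \mcl R^* = \sum_i \mu_i\,(\mcl P + \mcl R \D_i + \D_i^* \mcl R^*)$, and adding the concavity bound for the quadratic term gives
\[
\Phi(\D) \ge \sum_i \mu_i\bigl(\mcl P + \mcl R \D_i + \D_i^* \mcl R^* + \D_i^* \mcl Q \D_i\bigr) = \sum_i \mu_i\, \Phi(\D_i) \ge 0,
\]
where the last step uses $\mu_i \ge 0$ together with hypothesis~\eqref{eqn:polytope}. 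I expect the main obstacle to be the correct handling of the nonlinear term $\D^* \mcl Q \D$: unlike the scalar or linear-multiplier settings, the IQC certificate $\Phi$ is not affine in $\D$, so one cannot simply average the vertex inequalities. The decisive point is that the sign condition $\mcl Q < 0$ renders this term operator concave, so that its interior value dominates the convex combination of its vertex values; some care is also needed to keep $\mcl Q$ self-adjoint so the cross terms collapse into the symmetric quadratic form above.
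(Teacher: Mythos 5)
Your proof is correct and follows the same overall route as the paper's: since $\Psi=I$ and each $\D_i$ acts memorylessly in time, the hard IQC reduces to the pointwise quadratic form $\ip{\mbf v(t)}{(\mcl P + \mcl R\D + \D^*\mcl R^* + \D^*\mcl Q\D)\mbf v(t)}_{\mbf Z}\ge 0$, which is then deduced from the vertex inequalities~\eqref{eqn:polytope}. The difference is that the paper's proof stops at the phrase ``by the convexity of the set $\mbf\D$ and $\mcl Q<0$,'' which is precisely the nontrivial step, whereas you supply it: the map $\D\mapsto\D^*\mcl Q\D$ is operator concave when $\mcl Q=\mcl Q^*\le 0$, as your identity
\[
\bbl(\textstyle\sum_i\mu_i\D_i\bbr)^{*}\mcl Q\bbl(\textstyle\sum_i\mu_i\D_i\bbr)-\sum_i\mu_i\D_i^{*}\mcl Q\D_i=-\tfrac{1}{2}\sum_{i,j}\mu_i\mu_j(\D_i-\D_j)^{*}\mcl Q(\D_i-\D_j)
\]
shows (I checked the algebra; it is right, using $\sum_j\mu_j=1$ and the symmetry of the double sum). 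You are also right to flag that the argument needs $\mu_i\ge 0$: without nonnegative weights the right-hand side of the identity need not be a nonnegative operator and the final averaging step $\sum_i\mu_i\Phi(\D_i)\ge 0$ fails, so the lemma as stated (which imposes only $\sum_i\mu_i=1$) implicitly assumes the standard polytopic convention, and your proof makes that hypothesis explicit where the paper does not.
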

\begin{proof}
Suppose  $\mcl P, \mcl R, \mcl Q \in \Pi_4$ are such that $\mcl Q < 0$ and Inequality~\eqref{eqn:polytope} is satisfied. Then for any $\D \in \mbf \D$ and $\mbf v \in \mbf L_{e, [a, b]}^\m$ we have that
\begin{align*}
& \ip{\bmat{\mbf v(t) \\ (\D \mbf v)(t)}}{  \bmat{\mcl P & \mcl R \\ \mcl R^* & \mcl Q}\bmat{\mbf v(t) \\ (\D \mbf v)(t)}}_{\mbf Z}\\
& = \ip{\mbf v(t)}{ P + \D^* R^* + R \D + \D^* Q \D\mbf v(t)}_{\mbf Z}.
\end{align*}
Thus, by the convexity of the set $\mbf \D$ and $\mcl Q < 0$ we have that any $\D \in \mbf \D$ satisfies the IQC defined by
\[
\mcl K =  \bmat{\mcl P & \mcl R \\ \mcl R^* & \mcl Q} \quad \text{and} \quad \Psi = I.
\]
\end{proof}
%
\subsection{Sector-bounded uncertainty}
\begin{lem}\label{lem:sector} Suppose that for $\mbf v \in \mbf L_{e, [a, b]}$, $(\D \mbf v)(s, t) = \phi(\mbf v(s, t))$ for all $s \in [a, b]$ and $t > 0$, where $\phi$ satisfies
\begin{equation}
 \alpha v^2 \leq v\phi(v) \leq \beta v^2\quad \text{ for all } v\in \R.
\end{equation}
Then $\D$ satisfies Hard IQC defined by $\Psi = I$ and
\begin{align}
\label{eqn:sector_IQC}
{\mcl K} &= \bmat{\beta I & - I \\ -\alpha I & I}^* \bmat{0 & \mathcal{R} \\ \mathcal{R} & 0}\bmat{\beta I & - I \\ -\alpha I & I} \\
    &\qquad = \bmat{-\beta^* \mcl R \alpha-\alpha^* \mcl R \beta & \beta^*\mcl R + \alpha^* \mcl R \\ \mcl R \beta + \mcl R \alpha & -2\mcl R} \notag,
\end{align}
for any $\mcl R\in \Pi_4$ where
\begin{align*}
(\mathcal{R}\mbf x)(s, t) & :=   R_0(s) \mbf x(s, t)
\end{align*}
 for some  $R_0(s) \geq 0$.
\end{lem}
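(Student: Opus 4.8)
The plan is to verify the Hard IQC inequality directly by showing that the quadratic form defined by $\mcl K$, when evaluated along the graph of $\D$, reduces pointwise to the sector condition. Since $\Psi = I$ here, the IQC inequality~\eqref{IQC:D} becomes
\[
\int_0^T \ip{\bmat{\mbf v(t) \\ (\D\mbf v)(t)}}{\mcl K \bmat{\mbf v(t) \\ (\D\mbf v)(t)}}_{\mbf Z}\, dt \ge 0,
\]
so it suffices to show the integrand is nonnegative for each $t$. The factored form of $\mcl K$ in~\eqref{eqn:sector_IQC} is the key structural observation: writing $\mbf w(t) = (\D\mbf v)(t)$, the vector $\bmat{\beta I & -I \\ -\alpha I & I}\bmat{\mbf v(t)\\ \mbf w(t)}$ has components $\beta \mbf v(t) - \mbf w(t)$ and $-\alpha \mbf v(t) + \mbf w(t)$, and the middle matrix $\bmat{0 & \mcl R \\ \mcl R & 0}$ pairs these two together.

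\textbf{First} I would substitute the factorization into the integrand so that the pointwise quadratic form becomes
\[
2\,\ip{\beta\mbf v(t) - \mbf w(t)}{\mcl R\,(\mbf w(t) - \alpha\mbf v(t))}_{\mbf Z},
\]
using $\mcl R^* = \mcl R$ (which holds since $\mcl R$ is multiplication by the symmetric, indeed scalar, $R_0(s)\ge 0$). \textbf{Next}, because $\mcl R$ acts as multiplication by $R_0(s)\ge 0$ and $\D$ acts pointwise in $s$ via $\mbf w(s,t) = \phi(\mbf v(s,t))$, I would reduce the $\mbf Z$-inner product to a spatial integral of a scalar expression: the integrand equals $2\int_a^b R_0(s)\,(\beta\mbf v(s,t) - \phi(\mbf v(s,t)))(\phi(\mbf v(s,t)) - \alpha\mbf v(s,t))\, ds$ (plus the finite-dimensional component, handled identically). \textbf{Then} the scalar sector condition $\alpha v^2 \le v\phi(v) \le \beta v^2$ gives, for each fixed $v = \mbf v(s,t)$, the inequality $(\beta v - \phi(v))(\phi(v) - \alpha v) \ge 0$, which is the standard algebraic consequence of the sector bound. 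Since $R_0(s)\ge 0$, each term in the spatial integral is nonnegative, hence the integrand is nonnegative, and integrating over $[0,T]$ yields~\eqref{IQC:D}.

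\textbf{The main obstacle} I anticipate is bookkeeping rather than conceptual: one must handle the product structure of $\mbf Z^{\n+\m} = \R^{n_1+m_1}\times L_2$ carefully, confirming that $\mcl R$, $\alpha$, and $\beta$ act diagonally and commute appropriately so that the factorization in~\eqref{eqn:sector_IQC} genuinely equals the expanded form, and that $\phi$ being applied componentwise is compatible with the block structure of the graph. The sector inequality itself must be applied componentwise across the vector-valued output, which is routine once the scalar case is established. A minor point to verify is that the cross term $\beta^* \mcl R \alpha$ and its transpose combine correctly given that $\alpha,\beta$ are scalars (so $\beta^* = \beta$, $\alpha^* = \alpha$) and $\mcl R$ is self-adjoint; with these the second displayed form of $\mcl K$ in~\eqref{eqn:sector_IQC} matches the factored form, closing the argument.
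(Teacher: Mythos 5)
Your proposal takes essentially the same route as the paper: both use the factored form of $\mcl K$ together with self-adjointness of $\mcl R$ to reduce the IQC to the pointwise inequality $(\beta \mbf v(s,t)-\phi(\mbf v(s,t)))\,R_0(s)\,(\phi(\mbf v(s,t))-\alpha\mbf v(s,t))\ge 0$, which follows from the sector bound and $R_0(s)\ge 0$, and then integrate in $s$ and $t$. The argument is correct and no substantive step differs from the paper's proof.
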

\begin{proof}
Suppose $\mcl R \in \Pi_4$ is such that $(\mathcal{R}\mbf x)(s, t) = R_0(s) \mbf x(s, t) + \int_a^s R_1(s, \theta) \mbf x(\theta, t) d\theta + \int_s^b R_2(s, \theta) \mbf x(\theta, t) d\theta$ with $R_0(s), R_1(s, \theta) , R_2(s, \theta) \geq 0$. Then for all $s \in [a, b]$ and $t > 0$ we have that
\begin{align*}
 (\beta \mbf v(s, t) - \phi(\mbf v(s, t))) (\phi(\mbf v(s, t)) - \alpha \mbf v(s, t)) &\geq 0,
 \end{align*}
and hence we have that
\[
(\beta \mbf v(s, t) - \phi(\mbf v(s, t)))R_0(s) (\phi(\mbf v(s, t)) - \alpha \mbf v(s, t))  \geq 0.
\]
Therefore $\D$ satisfies the IQC defined by $\Psi = I$ and $\mcl K$.
\end{proof}

%
%
%

\section{Numerical examples}
\label{sec:examples}

In the following examples, we consider the problem of robust input-output stability of several systems by separating the system into nominal and uncertain subsystems and then testing the conditions of Theorem~\ref{thm:IQC} using Lemma~\ref{lem:KYP} and the software package PIETOOLS. Unless otherwise stated, the conversion of the nominal system to a PIE is performed using the conversion utilities in PIETOOLS. For simplicity, we do not include the external disturbances in the original model, although the effect of these disturbances can be inferred by the definition of the interconnection.

\subsubsection*{Example 1}
We begin with a system modeled by a reaction-diffusion equation. We would like to find the largest $\lambda_{\max}$ such that
\[
\mbf x_t(s, t) = \lambda \mbf x(s,t)+ \mbf x_{ss}(s, t)
\]
is stable for all $\lambda \in [0,\lambda_{\max}]$. For this problem, we split the dynamics into nominal and uncertain subsystems, defining the nominal $G$ by
\begin{align*}
\mbf x_t(s, t) &= \frac{\lambda_{\max}}{2}\mbf x(s,t)+\mbf x_{ss}(s, t) + \mbf u(s, t) \\
(G\mbf u)(t,s)&=\mbf x(t,s),
\end{align*}
where $s \in [0,1]$ and boundary conditions are $\mbf x(0, t) = \mbf x(1, t) = 0$. We consider the uncertain subsystem as $(\D \mbf v)(s, t) := \lambda \mbf v(s, t)$ where $\lambda \in [-\frac{\lambda_{max}}{2}, \frac{\lambda_{max}}{2}]$.

By Lemma~\ref{lem:constant}, $\Delta$ satisfies the hard IQC defined as in Eqn.~\eqref{eqn:constant} for any suitable $\mcl P,\mcl R$ and $\Psi$. For this test, we choose $\Psi$ to be defined as $\Psi := \bmat{H &0\\  0 & H }$,  where $H$ is defined as $(H \mbf y)(s,t):=\bmat{\mbf z(s, t)\\ \mbf y(s, t)}$  where
\[
\mbf z_t(s, t) = \mbf z_{ss}(s, t) + 0.5 \pi^2 \mbf z(s, t) + \mbf y(s,t) \qquad
\]
By testing the conditions of Theorem~\ref{thm:KYP_and_IQC} using PIETOOLS, it can be shown that the conditions of Eqn.~\eqref{eqn:KYP_PSIG} are feasible for $\lambda_{\max}=.99{\pi}^2$, implying that the diffusion equation is stable for any $\lambda \in [0, .99{\pi}^2]$. Note that non-robust approaches to stability analysis~\cite{valmorbida2014semi} confirm the stability interval as approximately $\lambda \in [0, \pi^2]$.

\subsection*{Example 2}
Consider the time-delay system
\begin{equation}
\dot x(t) = A x(t) +  A_d x(t- \tau) \label{eqn:time_delay},
\end{equation}
where $\tau>0$ is an uncertain delay parameter. Given $\tau_0 > 0$, the goal is to maximize $\lambda$ such that for $\tau_{\max}^{-1}=\tau_0^{-1}-\lambda$ and $\tau_{\min}^{-1}=\tau_0^{-1}+\lambda$, system~\eqref{eqn:time_delay} is stable for all $\tau \in [\tau_{min}, \tau_{max}]$ where $\tau_{\min}>0$.

For this problem, we use the nominal DDE system $G$ defined using a PIE as
\begin{align*}
\mcl T \dot{\mbf x}(t) &= \mcl A \mbf x(t) + \mcl B \mbf u(t) \\
(G \mbf u)(t) & = \mbf x(t),
\end{align*}
where $\mbf x(t) \in \mbf Z$ and
\begin{align*}
\mcl T  &= \mcl P\bmat{I & 0 \\ I & \{0, 0, I\}}, \\
\mcl A  &= \mcl P\bmat{A+   A_d & -  A_d    \\
0 & \{\tau_0^{-1}, 0 , 0\}}, \notag  \\
\mcl B  &= \mcl P\bmat{0 & 0 \\ 0 & \{\lambda , 0 , 0\}}.\notag
\end{align*}
The uncertain system $\D$ is defined as $(\D \mbf v)(t)  =\lambda \mbf v(t)$ where $|\lambda|<1$.

Note that $\D$ satisfies the Hard IQC defined in Lemma~\ref{lem:constant} as in Eqn.~\eqref{eqn:constant} for any suitable $\mcl P,\mcl R$ and $\Psi := \bmat{H &0\\  0 & H }$.

We now consider the system defined in~\cite{wu2021robust}, where
\begin{equation} \label{eqn:PIE_delay}
A = \bmat{
         0     &    1 \\
         -2     &    1
}, \qquad
A_1 = \bmat{
     0    & 0  \\
     1    & 0  \\
}
\end{equation}
In details, we use $(H \mbf y)(s,t):=\bmat{G \mbf y(s, t)\\ \mbf y(s, t)}$ and $\tau_0 = 0.189$

Using Theorem~\ref{thm:KYP_and_IQC} with the multiplier $\Psi$, as defined above, we find a robust stability region of $\tau \in [0.1008, 1.66]$ using a single storage function. Note that if we do not include the multiplier $\Psi$ (the quadratic stability case), we obtain the smaller interval $\tau \in [0.11, 0.63]$. For comparison, this problem has a known an stability range of $\tau \in [0.1, 1.717]$, that was shown in~\cite{gu2003stability}.

\subsubsection*{Example 3}
 The next example is a modification of a PDE in studied in~\cite{gahlawat2016convex}.
\begin{equation}\label{eqn:example3}
\mbf x_t(s, t)  = a(s)\mbf x_{ss}(s, t) + b(s)\mbf x_s(s, t) + c(s) \mbf x(s, t) + \lambda \mbf x_s (s, t),
\end{equation}
where $a(s) =s^3 - s^2 + 2 $, $b(s) =3s^2 - 2s $, $c(s) = -0.5s^3 + 1.3s^2- 1.5s + 3.03 $ and $\mbf x(0, t) =\mbf x_s(1, t) = 0$. We would like to find the maximal $\lambda_{max}$ such that the system~\eqref{eqn:example3} is stable for all $\lambda \in [-\lambda_{max}, \lambda_{max}]$.

For this task, we consider the feedback interconnection defined by $G$
 \begin{align*}
\mbf x_t(s, t)  = &a(s)\mbf x_{ss}(s, t)\hspace{-0.5mm} + b(s)\mbf x_s(s, t)\hspace{-0.5mm} + c(s) \mbf x(s, t)\hspace{-0.5mm} + \mbf u(s, t)\\
(G\mbf u)(s, t)  &= \mbf x_s(s, t).
\end{align*}

And the uncertainty is defined as $(\D \mbf v)(s, t) = \lambda \mbf v(s, t)$. Thus, $\D$ satisfies the Hard IQC defined in Lemma~\ref{lem:constant}, where we used $(H \mbf y)(s, t)  = \bmat{\mbf z(s, t) \\ \mbf y(s ,t)}$ where
\begin{align*}
\mbf z_t(s, t) &=\mbf z_{ss}(s, t) + 4.9\mbf z(s, t)+ \mbf y(s, t).
\end{align*}
Using Theorem~\ref{thm:KYP_and_IQC} and this multiplier, we may show the stability region for any $\lambda \in [-2.8, 2.8]$.

\subsubsection*{Example 4}
The nonlinear example is adapted version~\cite{das2020robust} of the nonlinear reaction-diffusion PDE in Examples 1 and 3.
\begin{equation}\label{eqn:example4}
\mbf x_t(s, t) = \mbf x_{ss}(s, t) + \lambda \mbf x(s, t) + \phi(\mbf x(s, t)),
\end{equation}
where $\mbf x(0, t)=  \mbf x(1, t) = 0$ and the nonlinear feedback part is defined by the sector bounded function $\phi:\R\rightarrow\R$ where $- u^2\leq \phi(u)u \leq  u^2$ for $u\in \R$. The goal, then, is to find the largest $\lambda$ such that the system~\eqref{eqn:example4} is stable for any $\phi$ which satisfies the given sector bound.

First, we represent this system as the interconnection defined by $[G,\D]$, where $G$ is
\begin{align*}
\mbf x_t(s, t) &=\mbf  x_{ss}(s, t) + \lambda \mbf x(s, t) + \mbf u(s, t) \\
(G\mbf u)(s, t) & = \mbf x(s, t).
\end{align*}
Second, we define the uncertain system as $(\D \mbf v)(s ,t) = \phi(\mbf v(s, t))$.

By Lemma~\ref{lem:sector}, we have that $\D$ satisfies the Hard IQC defined by $\Psi = I$ and $\mcl K$ as in Eq.~\ref{eqn:sector_IQC}. Using Theorem~\ref{thm:IQC}, we are able to prove stability when $\lambda \leq 1.7$ -- mirroring the results in~\cite{das2020robust}.

\section{Conclusion}
In this paper, we proposed a framework for using convex optimization to study the interconnection of infinite-dimensional subsystems.
First, we extended the IQC framework to infinite-dimensional systems, signals, interconnections, and multipliers, and generalized an IQC stability theorem to such interconnections. Second, we assumed both the nominal subsystem and multiplier were represented as PIEs and extended the KYP Lemma to such systems, proposing convex tests for conditions of the IQC theorem to be satisfied. Third, we examined several classes of nonlinearity and uncertainty with infinite-dimensional inputs and outputs and showed that they satisfy a generalized version of the hard IQC constraints typically used for finite-dimensional systems. Finally, we applied the results to several example problems and showed that the proposed approach offers an improvement over alternatives such as quadratic stability.

\section*{Funding}
This work was supported by the National Science Foundation under grants No. 1931270 and 1935453.

%

\addtolength{\textheight}{-12cm}   



\bibliographystyle{IEEEtran}

\bibliography{Talitckii_ACC2023_v1}

\end{document}